    \patchcmd{\section}{\scshape}{\large\bfseries}{}{}
    \renewcommand{\@secnumfont}{\bfseries}
\numberwithin{equation}{section}
\newtheorem{theorem}{Theorem}[section]
\newtheorem*{theorem*}{Theorem}
\newtheorem{corollary}[theorem]{Corollary}
\newtheorem{lemma}[theorem]{Lemma}
\newtheorem{proposition}[theorem]{Proposition}
\theoremstyle{definition}
\newtheorem{definition}[theorem]{Definition}
\newtheorem{remark}[theorem]{Remark}
\def\ZZ{\mathbb{Z}}
\def\QQ{\mathbb{Q}}
\def\epi{\twoheadrightarrow}
\def\Ext{\mathsf{Ext}}
\def\Ker{\mathsf{Ker}}
\def\gg{\mathfrak{g}}
\def\ff{\mathfrak{f}}
\def\ll{\mathfrak{l}}
\def\rr{\mathfrak{r}}
\def\uu{\mathfrak{u}}
\DeclareMathOperator{\rk}{\mathrm{rk}}
\DeclareMathOperator{\spn}{\mathrm{span}}
\def\llb{\llbracket}
\def\rrb{\rrbracket}
\def\llp{\llparenthesis}
\def\rrp{\rrparenthesis}
\def\Sym{\mathsf{Sym}}
\title{Homology of the pronilpotent completion and cotorsion groups}
\author{Mikhail Basok} 
\address{
Laboratory of Modern Algebra and Applications,  St. Petersburg State University, 14th Line, 29b,
Saint Petersburg, 199178 Russia}
\email{m.k.basok@gmail.com}
\author{Sergei O. Ivanov} 
\address{
Laboratory of Modern Algebra and Applications,  St. Petersburg State University, 14th Line, 29b,
Saint Petersburg, 199178 Russia}
\email{ivanov.s.o.1986@gmail.com}
\author{Roman Mikhailov} 
\address{
Laboratory of Modern Algebra and Applications,  St. Petersburg State University, 14th Line, 29b,
Saint Petersburg, 199178 Russia, and St. Petersburg Department of Steklov Mathematical
Institute}
\email{rmikhailov@mail.ru}
\thanks{The work is supported by the grant of the Government of the Russian Federation for the
state support of scientific research carried out under the supervision of leading scientists, agreement 14.W03.31.0030
dated 15.02.2018.1}
\begin{document}

\maketitle

\begin{abstract}
For a non-cyclic free group $F$, the second homology of its pronilpotent completion $H_2(\widehat F)$ is not a cotorsion group.    
\end{abstract}

\section{Introduction}

\subsection{Main result} Given a group $G$, denote by $\{\gamma_i(G)\}_{i\geq 1}$ its lower central series $\gamma_1(G):=G,$ $\gamma_{i+1}(G):=[\gamma_i(G),G],\ i\geq 1$. The inverse limit $\widehat G:={\varprojlim}\ G/\gamma_i(G)$ is called the {\it pronilpotent completion of $G$}. In this paper we study free pronilpotent completions $\widehat F$, that is the pronilpotent completions of free groups $F$. Our main result is the following:

\vspace{.5cm}\noindent{\bf Theorem A.} For a non-cyclic free group $F$, the second integral homology $H_2(\widehat F)$ is not a cotorsion group. 

\vspace{.5cm}
\noindent [For integral homology we omit the coefficients, i.e. $H_*(-)=H_*(-,\mathbb Z).$] Recall that an abelian group $A$ is called \emph{cotorsion}, if ${\sf Ext}(\mathbb Q,A)=0.$ The class of cotorsion groups coincides with the class of values of $\varprojlim^1$-functor for inverse sequences of abelian groups \cite{warfield1979values}. As a corollary, we see that $H_2(\widehat{F})$ can not be presented as $\varprojlim^1$ of an inverse sequence of abelian groups $A_1 \leftarrow A_2 \leftarrow \dots $
\begin{equation}
H_2(\widehat{F}) \not\cong {\varprojlim}^1 A_i.
\end{equation}
In particular, for the case of pronilpotent completion there is no chance to get a Milnor-type exact sequence 
$$
{\varprojlim}^1 H_{*+1}\to H_*(\varprojlim)\to \varprojlim H_*
$$

We also prove the following statement which shows that the property to be a cotorsion group appears naturally in the context of homology of the pronilpotent completions. 

\vspace{.5cm}\noindent{\bf Theorem B.}
Let $G$ be a finitely generated group and $F\epi G$ be a presentation, where $F$ is a free group of finite rank. Then the cokernel of the map
\begin{equation}
H_2(\widehat{F})\oplus H_2(G) \longrightarrow H_2(\widehat{G})
\end{equation}
is cotorsion. 
\vspace{.5cm}

Next we give some motivation for the studying the free nilpotent completion and in particular its homological properties. 

\subsection{Group theory} 
In 1960-s G. Baumslag initiated a study of parafree groups. A group is called parafree if it is residually nilpotent and has the same lower central quotients as a free group \cite{baumslag1967some}, \cite{baumslag1967groups}, \cite{baumslag1969groups}, \cite{baumslag1968more}. What are the properties parafree groups inherit from free groups? More general, what group-theoretical properties can one extract from the structure of the lower central quotients? These problems were studied by G. Baumslag during decades. For a free group $F$, all parafree groups with the same lower central quotients as $F$, are contained in the free pronilpotent completion $\widehat F$. If $F$ is finitely generated, $\widehat F$ is itself parafree. 

The free pronilpotent completion $\widehat F$ is an extremely complicated group. A.K. Bousfield proved in \cite{bousfield1977homological} that its integral second homology $H_2(\widehat F)$ is uncountable.  What is the cohomological dimension of $\widehat F$? Is it true that $cd(\widehat F)=2$? Is it true that $H_3(\widehat F)=0$? These are open problems. These problems can be viewed as a part of Baumslag's program of the study of properties of parafree groups. The problem about the structure of $H_2(\widehat F)$ also is intriguing. As a part of the proof of uncountablility of $H_2(\widehat F)$ in \cite{bousfield1977homological}, it is shown that there exists an epimorphism from $H_2(\widehat F)$ to the exterior square of 2-adic integers. The exterior square of 2-adic integers is an uncountable divisible torsion-free group. In \cite{ivanov2018discrete} it is shown that $H_2(\widehat F)$ is not divisible. It seems that nothing more is published about the structure of $H_2(\widehat F)$. 

The homology of a group are related to its lower central quotients by the classical Stallings theorem \cite{stallings1965homology}: a homomorphism of two groups which induces an isomorphism of $H_1$ and an epimorphism of $H_2$ induces isomorphism of the lower central quotients. In particular, for a group with a free abelian $H_1$ and zero $H_2$, the lower central quotients are free nilpotent. Here is a simple remark which gives a generalization of the above statement. 
\begin{remark}\label{remark1}
Let $G$ be a group with a free abelian $H_1(G)$ and ${\sf Hom}(H_2(G),\mathbb Z)=0$. Then there exists a free group $F$ and a homomorphism $F\to G$ which induces the isomorphisms of the lower central quotients. 
\end{remark}
[We give the proofs of the remarks from this section at the end of the paper.]

In particular, a residually nilpotent group $G$ with a free abelian $H_1(G)$ and ${\sf Hom}(H_2(G),\mathbb Z)=0$ is parafree. Is there a chance to convert this statement? In particular, can one show that ${\sf Hom}(H_2(\widehat F),\mathbb Z)=0$? We leave these questions in the form of conjectures. 

\vspace{3mm}\noindent {\bf Conjecture 1.} For a non-cyclic free group $F$ the following holds 
\begin{equation}
{\sf Hom}(H_2(\widehat F),\ZZ)=0.    
\end{equation}

\noindent {\bf Conjecture 2.} Let $G$ be a residually nilpotent group. Then $G$ is parafree if and only if $H_1(G)$ is free abelian and ${\sf Hom}(H_2(G),\ZZ)=0.$ 
\vspace{3mm}

Conjecture 1 would imply that $H_2(\widehat{F})$ is not free abelian, and hence, the cohomological dimension of $\widehat F$ would be greater than two. The class of cotorsion groups is a subclass of groups with the property ${\sf Hom}(-, \mathbb Z)=0$. Our Theorem A shows that $H_2(\widehat F)$ is not from this subclass.

\subsection{Low-dimensional topology} In order to find transifinte $\bar\mu$-invariants of links, K. Orr introduced the space $K_\infty$ \cite{orr1987new}. The space $K_\infty$ is the mapping cone of the natural map $K(F,1)\to K(\widehat F,1)$. 
The homotopy groups $\pi_i(K_\infty),\ i\geq 3,$ are repositories of potential invariants of links (see \cite{cochran1987link}). In the case of classical links, the invariants lie in $\pi_3(K_\infty)$. 
The group $\pi_3(K_\infty)$ is infinite \cite{dror2018third}, however its structure is far from being clear. 
In particular, the Hurewicz homomorphism $\pi_3(K_\infty)\to H_3(\widehat F)$ is an epimorphism.  
That is, the existence of non-zero elements in the (higher) homology of free pronilpotent completions may have application in low-dimensional topology.   

\subsection{Bousfield-Kan theory} The free pronilpotent completion $\widehat F$ appears naturally in the theory of Bousfield-Kan \cite{bousfield1972homotopy},  constructions of localizations and completions for spaces. 
 
Recently Barnea and Shelah proved that, for any
 sequence of epimorphisms $G_{i+1}\to G_i,\ i\geq 1$, the kernel and cokernel of the natural map
$$
H_1(\varprojlim\ G_i)\to \varprojlim H_1(G_i)
$$ 
are cotorsion groups \cite{barnea2018abelianization}. The next remark is a consequence of this statement. 

\begin{remark}\label{remark2} For a connected space $X$, the cokernel of the natural map $$H_1(X)\to H_1({\mathbb Z}_\infty X)
$$  
is a cotorsion group. Here $\mathbb Z_\infty X$ is the integral Bousfield-Kan completion of $X$.  
\end{remark}

For a free group $F$, $\mathbb Z_\infty K(F,1)=K(\widehat F,1)$ (see \cite{bousfield1972homotopy}, Section IV). Theorem A implies that, the cokernel of the natural map 
$$
H_2(K(F,1))\to H_2(\mathbb Z_\infty K(F,1))=H_2(K(\widehat F,1))
$$
is not a cotorsion group. That is, the above remark can not be extended to the second homology. 

\subsection{Lie algebras} For a Lie algebra $\gg$ over $\ZZ$, consider its lower central series $\{\gamma_i(\gg)\}_{i\geq 1}$ and its pronilpotent completion $\widehat \gg:={\varprojlim}\ \gg/\gamma_i(\gg)$. Most of the discussed above problems for groups can be acked for Lie algebras as well. In particular, for the moment we do not understand the structure of homology of a free pronilpotent completion in the case of Lie algebras. We are able to prove the following:

\vspace{.5cm}\noindent{\bf Lie analog of Theorem A.} For a non-cyclic free Lie algebra $\ff$ over integers, $H_2(\widehat \ff)$ is not a torsion group. 
\vspace{.5cm}

In \cite{ivanov2020homological} it is shown that $H_2(\widehat \ff)$ is uncountable. The method used in \cite{ivanov2020homological} is similar to one from \cite{ivanov2019finite}, the authors present explicit cycles in the Chevalley-Eilenberg complex 
$$
\widehat \ff\wedge \widehat \ff\wedge \widehat \ff\to \widehat \ff\wedge \widehat \ff\to \widehat \ff
$$
and show that these cycles are not boundaries. In particular, divisible elements in $H_2(\widehat \ff)$ are constructed in \cite{ivanov2020homological} in this way. Let $\ff$ be a free Lie algebra on generators $a_1,\dots, a_n$. All the elements in $H_2(\widehat \ff)$ constructed in \cite{ivanov2020homological} are of the form 
\begin{equation}\label{formofelements}
\alpha_1\wedge a_1+\dots+\alpha_n\wedge a_n, 
\end{equation}
for some (infinite) $\alpha_i\in \widehat \ff$. 

\begin{remark}\label{remark3}
In the above notation, the subgroup in $H_2(\widehat \ff)$ generated by elements of the form (\ref{formofelements}) is cotorsion. 
\end{remark}

That is, the group $H_2(\widehat \ff)$ contains a huge uncountable cotorsion subgroup.

\subsection{Structure of the paper} The paper is organized as follows. In Section 2 we recall the properties of cotorsion groups and give a proof of Theorem B. It turns out that Theorem B follows from the generalized Hopf formula and the result of Barnea and Shelah. Section 3 is the most complicated section of the paper, it contains technical results about power series. The main result of Section 3 is Theorem 3.1. In Section 4 we consider the integral lamplighter group $L=\ZZ \wr \ZZ$. The main result of Section 4 is Theorem 4.4 which states that the cokernel of the natural map $H_2(L)\to H_2(\widehat L)$ is not a cotorsion group. Theorem 3.1 appears there as a key technical ingredient. Theorem A now follows immediately from Theorem B and Theorem 4.4. Observe that, the cokernel of the map $H_2(\widehat F)\oplus H_2(L)\to H_2(\widehat L)$ which is a cotorsion group by Theorem B, is huge, for instance it can be shown that it maps onto the quotient $\mathbb Z\llb x\rrb/\mathbb Z [x].$ In Section 6 we overview the arguments needed to prove the analogs of Theorems A and B for the case of Lie algebras. At the end of the paper, in Section 7, we give proofs of three Remarks written in Introduction.

\section{Cotorsion groups}

\subsection{Background about cotorsion groups}
\label{subsec:cotorsion_general}
Here we remind the definition and some facts about cotorsion abelian groups that can be found in the book of Fuchs \cite[Ch. IX,\S 54]{fuchs1970infinite}. We will also menstion a result of Warfield and Huber \cite{warfield1979values}. 

An abelian group $C$ is called \emph{cotorsion}, if $\Ext(\QQ,C)=0.$ The following properties are equivalent. 

\begin{enumerate}
\item $C$ is cotorsion;
\item $\Ext(A,C)=0$ for any torsion free group $A;$
\item $C\cong {\varprojlim}^1 A_i$ for some inverse sequence of abelian groups $A_1 \leftarrow A_2 \leftarrow \dots$ (see \cite{warfield1979values}). 
\end{enumerate}
Examples of cotorsion groups: divisible groups; finite groups; bounded groups; moreover, a reduced torsion group is cotorsion iff it is bounded;  for any abelian groups $A,B$ the group $\Ext(A,B)$ is cotorsion; a quotient of a cotorsion group is cotorsion; product of a family of groups is cotorsion iff each of them is cotorsion;  inverse limit of reduced cotorsion groups is a reduced cotorsion group; in particular, the group of $p$-adic integers $\ZZ_p=\varprojlim \ZZ/p^i$ is a reduced cotorsion group; for any sequence of abelian groups $A_1,A_2,\dots$ the quotient $(\prod A_i)/(\bigoplus A_i)$ is cotorsion. 

Non-examples of cotorsion groups: $\ZZ$ is not cotorsion; moreover, any group $A$ with a non-trivial homomorphism  $A\to \ZZ$ is not cotorsion; the infinite direct sum indexed by all primes  $\bigoplus_p \ZZ/p$ is not cotorsion.  

The notion of cotorsion group is closely related to the notion of algebraically compact group. A group $C$ is called \emph{algebraically compact}, if ${\sf Pext}(A,C)=0$ for any $A.$ Since for a torsion free abelian group $A$ we have ${\sf Ext}(A,C)={\sf Pext}(A,C),$ we see that any algebraically compact group is cotorsion. On the other hand any cotorsion group is a quotient of an algebraically compact group. Moreover, a torsion free abelian group is cotorsion if and only if it is algebraically compact.

\subsection{Cotorsion quotients of inverse limits} A theorem of Hulanicki \cite{hulanicki1962structure},  \cite[Cor. 42.2]{fuchs1970infinite} says that for a sequence of abelian group $A_1,A_2,\dots $ the  quotient of the infinite product by the infinite direct sum $(\prod A_i) / (\bigoplus A_i)$ is algebraically compact, and hence, it is cotorsion. Barnea and Shellah in \cite[Th 2.0.4]{barnea2018abelianization} proved a version of this result about inverse limits of non-abelian groups. For an inverse sequence of groups and epimorphisms $G_1 \twoheadleftarrow G_2 \twoheadleftarrow \dots  $ a subgroup of the inverse limit $U\subseteq \varprojlim G_i$ is dense in the inverse limit topology if and only if the maps $U\to G_i$ are surjective. Then the theorem of Barnea and Shelah can be reformulated as follows. 

\begin{theorem}[{\cite[Th 2.0.4]{barnea2018abelianization}}] \label{th:barnea_shelah}
Let $G_1 \twoheadleftarrow G_2 \twoheadleftarrow \dots  $ be an inverse sequence of groups and epimorphisms, $G_\infty=\varprojlim G_i$ and $U$ be a dense subgroup of $G_\infty$ containing $[G_\infty,G_\infty].$ Then $G_\infty / U$ is a cotorsion abelian group. 
\end{theorem}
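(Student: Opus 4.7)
The plan is to reduce to the abelianization and represent $G_\infty/U$ via derived inverse limits, invoking the Warfield--Huber characterization of cotorsion groups as values of $\varprojlim^1$.

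First I would observe that $G_\infty/U$ is abelian (since $U \supseteq [G_\infty, G_\infty]$), and set $A = G_\infty^{\rm ab}$, $\bar U = U/[G_\infty, G_\infty] \subseteq A$, together with $\bar K_n$ the image of $K_n := \ker(G_\infty \twoheadrightarrow G_n)$ in $A$; so $G_\infty/U = A/\bar U$. The density of $U$ in $G_\infty$ is equivalent to $\bar U + \bar K_n = A$ for every $n$, i.e.\ $\bar U$ surjects onto every $H_1(G_n) = A/\bar K_n$. Writing $\phi : A \to B := \varprojlim H_1(G_n)$ with $K := \ker \phi$ and $C := \mathrm{coker}\,\phi$, the Barnea--Shelah theorem in the form stated in the introduction gives that both $K$ and $C$ are cotorsion.

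Second, I would carry out two parallel $\varprojlim^1$-calculations via the six-term $\varprojlim$-sequence. Applied to the short exact sequence of inverse systems $0 \to \bar U \cap \bar K_n \to \bar U \to H_1(G_n) \to 0$ (with $\bar U$ constant, the quotient identified by density), this yields $\varprojlim^1(\bar U \cap \bar K_n) \cong B/\phi(\bar U)$. The analogous sequence $0 \to \bar U \cap \bar K_n \to \bar K_n \to A/\bar U \to 0$ (using density to make the quotient system constantly $A/\bar U$) gives, after rewriting, the short exact sequence
\begin{equation*}
0 \to K/(K \cap \bar U) \to A/\bar U \to \ker\!\bigl(\varprojlim^1(\bar U \cap \bar K_n) \to \varprojlim^1(\bar K_n)\bigr) \to 0.
\end{equation*}
The leftmost term is a quotient of the cotorsion group $K$ and is therefore cotorsion. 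By Warfield--Huber each of the two $\varprojlim^1$'s is cotorsion as well, and the latter equals $C$.

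The main obstacle will be to show that the third term in the displayed short exact sequence is cotorsion: a priori it sits only as a subgroup of the cotorsion group $\varprojlim^1(\bar U \cap \bar K_n)$ with cotorsion quotient $C$, and subgroups of cotorsion groups are in general not cotorsion. My plan for overcoming this is to use the explicit $\prod/\!\bigoplus$-presentations of both derived limits in the style of Hulanicki, combined with the compatibility of the induced map between them, to exhibit the third term itself as the $\varprojlim^1$ of an auxiliary inverse sequence (hence cotorsion); this is the heart of the argument given in \cite{barnea2018abelianization}. Once both outer terms in the displayed sequence are known to be cotorsion, $A/\bar U$ is an extension of cotorsion by cotorsion and is therefore cotorsion, which is the desired conclusion.
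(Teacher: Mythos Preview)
The paper does not prove this theorem at all: it is quoted verbatim as \cite[Th.~2.0.4]{barnea2018abelianization} and used as a black box. There is therefore no ``paper's own proof'' to compare your proposal against; what the paper does is merely observe that the statement about $\ker$ and $\mathrm{coker}$ of $H_1(\varprojlim G_i)\to\varprojlim H_1(G_i)$ mentioned in the introduction can be reformulated as Theorem~\ref{th:barnea_shelah}, and then invokes the cited result.

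As for the content of your proposal, it is circular in its present form. In your second step you invoke ``the Barnea--Shelah theorem in the form stated in the introduction'' to conclude that $K$ and $C$ are cotorsion; but that introduction statement is precisely what the paper says is being \emph{reformulated} as Theorem~\ref{th:barnea_shelah}, i.e.\ it is the same theorem from the same source. You cannot assume it while trying to prove it. Moreover, you yourself flag the genuine difficulty---showing that the kernel of the map between the two $\varprojlim^1$ groups is cotorsion---and then defer it with ``this is the heart of the argument given in \cite{barnea2018abelianization}''. That is not a proof but a pointer back to the reference you are supposed to be proving. If you want an independent argument, you must actually carry out the Hulanicki-style $\prod/\bigoplus$ analysis (or an equivalent) without appealing to either formulation of the Barnea--Shelah result; otherwise the honest thing to do, as the paper does, is simply to cite it.
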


\subsection{Generalised Hopf's formula} The classical Hopf's formula says that for a group presented as a quotient of a free group $G=F/R$ its second integral homology can be computed as follows $H_2(G)\cong (R\cap [F,F])/[R,F].$ The following generalization of this formula is seems to be well known (c.f. \cite[ Lemma 2.3]{ivanov2018discrete}) but we could not find a reference. 

\begin{proposition}[Hopf's formula]\label{prop:hopf}
Let $U$ be a normal subgroup of a group $G,$ then there is an exact sequence
\begin{equation}
H_2(G) \to H_2(G/U) \to
\frac{U\cap [G,G]}{[U,G]}
 \to 0.
\end{equation}
\end{proposition}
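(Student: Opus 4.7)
The plan is to derive the stated exact sequence directly from the five-term exact sequence associated with the group extension
\begin{equation*}
1 \to U \to G \to G/U \to 1.
\end{equation*}
The Lyndon--Hochschild--Serre spectral sequence of this extension yields the standard five-term sequence
\begin{equation*}
H_2(G) \to H_2(G/U) \to H_0(G/U, H_1(U)) \to H_1(G) \to H_1(G/U) \to 0.
\end{equation*}

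First I would identify the middle term. Since $H_1(U) = U/[U,U]$, and $G/U$ acts on this abelianization via conjugation in $G$, the coinvariants are
\begin{equation*}
H_0(G/U, U/[U,U]) = U \big/ \langle [U,U],\ gug^{-1}u^{-1} : g \in G, u \in U \rangle = U/[U,G],
\end{equation*}
using that $[U,U] \subseteq [U,G]$. Next I would identify the connecting map $U/[U,G] \to H_1(G) = G/[G,G]$ as the map induced by the inclusion $U \hookrightarrow G$; this is the explicit description coming from the spectral sequence edge map (or equivalently from the bar resolution).

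Then the kernel of this connecting map is exactly $(U \cap [G,G])/[U,G]$, because an element $u[U,G]$ maps to $u[G,G]$, which is trivial iff $u \in [G,G]$. By exactness of the five-term sequence at $H_0(G/U, H_1(U))$, the image of $H_2(G/U) \to U/[U,G]$ coincides with this kernel, which produces the desired short exact sequence
\begin{equation*}
H_2(G) \to H_2(G/U) \to \frac{U \cap [G,G]}{[U,G]} \to 0.
\end{equation*}

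There is no serious obstacle: the only delicate point is making the identification of $H_0(G/U, H_1(U))$ with $U/[U,G]$ and the description of the edge map explicit, both of which are routine. In particular the classical Hopf formula is recovered by taking $G$ free and $U = R$ the kernel of a presentation $F \twoheadrightarrow G$, in which case $H_2(F) = 0$ forces an isomorphism $H_2(G) \cong (R \cap [F,F])/[R,F]$.
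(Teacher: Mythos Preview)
Your proposal is correct and follows essentially the same approach as the paper: both derive the result from the five-term exact sequence of the Lyndon--Hochschild--Serre spectral sequence for $1 \to U \to G \to G/U \to 1$, identify $H_0(G/U, H_1(U)) \cong U/[U,G]$ with the edge map induced by inclusion, and compute its kernel as $(U \cap [G,G])/[U,G]$.
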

\begin{proof}
Consider the short exact sequence $1\to U\to G\to G/U\to 1$ and the corresponding five-term exact sequence of the Lyndon–Hochschild–Serre spectral sequence
\begin{equation}
H_2(G) \to H_2(G/U) \to H_0(G/U,H_1(U)) \to H_1(G) \to H_1(G/U) \to 0.  
\end{equation}
Simple computations show that there are isomorphisms
\begin{equation}
H_0(G/U,H_1(U))\cong U/[U,G], \hspace{1cm} H_1(G)\cong G/[G,G]  
\end{equation}
and the map $H_0(G/U,H_1(U))\to H_1(G)$ corresponds to the  map $U/[U,G]\to G/[G,G]$ induced by embedding maps. Therefore, the kernel of this map is isomorphic to $(U\cap [G,G])/[U,G].$
\end{proof}

\subsection{Proof of Theorem B}
\label{subsection:proof1}

Set $R=\Ker(F\epi G),$ $\overline{R}:=\Ker(\widehat{F} \to \widehat{G})$ and $F^n=\gamma_n(F),$  $ G^n=\gamma_n(G).$ If we apply the limit to the short exact sequence 
\begin{equation}
1 \longrightarrow R/(R\cap F^n)  \longrightarrow F/F^n \longrightarrow G/G^n \longrightarrow 1 
\end{equation}
and use that the functor of limit is exact for towers of epimorphisms,
we obtain an isomorphism 
\begin{equation}
\overline{R}\cong \varprojlim R/(R\cap F^n)
\end{equation}
and the fact that  the map $\widehat{F}\to 
\widehat{G}$ is an epimorphism. Therefore by the generalised Hopf's formula (Proposition \ref{prop:hopf}) there is an exact sequence 
\begin{equation}
H_2(\widehat{F}) \to H_2(\widehat{G}) \to \frac{ \overline{R}\cap \widehat{F}^2 }{[\overline{R},\widehat{F}]} \to 0.
\end{equation}
We also have an isomorphism $H_2(G)=(R\cap F^2)/[R,F].$ Therefore it is sufficient to prove that the cokernel of the map
\begin{equation}
\frac{ R\cap F^2 }{[R,F]}    \longrightarrow  \frac{ \overline{R}\cap \widehat{F}^2 }{[\overline{R},\widehat{F}]}
\end{equation}
is cotorsion.

Prove it. For a finitely generated free group $F$ we have an isomorphism $F_{ab}\cong \widehat{F}_{ab}$ (see \cite[Th. 13.3 (iv)]{bousfield1977homological}, \cite[Th.2.1]{baumslag1977inverse}).
So, if we take the limit of the short exact sequence
\begin{equation}
1 \to  (R\cap F^2)/(R\cap F^n)  \to  R/(R\cap F^n)    \to F_{ab} 
\end{equation}
and use that the functor of limit is left exact, we obtain an exact sequence
\begin{equation}
1 \to  \varprojlim (R\cap F^2)/(R\cap F^n)  \to  \overline{R}    \to \widehat{F}_{ab}.
\end{equation}
It follows that 
\begin{equation}
\overline{R} \cap \widehat{F}^2 \cong   \varprojlim \  (R\cap F^2)/(R\cap F^n). 
\end{equation}
Note that the image of $R\cap F^2$ is dense in $\overline{R}\cap \widehat{F}^2$ with respect to the inverse limit topolgy. Denote this image by $I.$ Then the subgroup $I[\overline{R},\widehat{F}]$ of $\overline{R}\cap \widehat{F}^2$ satisfies assumptions of Theorem \ref{th:barnea_shelah}. Hence, the group $(\overline{R}\cap \widehat{F}^2)/(I[\overline{R},\widehat{F}])$ is cotorsion. The assertion follows.

\section{A technical result about power series}

\subsection{Formulation} For a commutative ring $R$ we denote by  $R\llb x\rrb$ be the ring of formal power series in the variable $x$ over $R$. Its elements will be called just power series. We also denote by $R^\times$ the group of invertible elements of $R.$   It is well known that a power series $f=\sum a_i x^i\in R\llb x \rrb$ is invertible if and only if its constant term $a_0$ is invertible in $R:$
\begin{equation}
R\llb x \rrb^\times = \left\{ \sum a_ix^i\in R\llb x \rrb \mid a_0\in R^\times \right\}. 
\end{equation}
The ring of polynomials $R[x]$ is a subring of $R\llb x \rrb$. We will also consider the following subring 
\begin{equation}
R[x]^l = \{ fg^{-1} \in R\llb x \rrb \mid f,g\in R[x],\  g \in R[x]\cap R\llb x\rrb^\times  \}. 
\end{equation}
Elements of this ring are called rational power series.

For two subrings $S,T$ of a commutative ring we denote by $S\cdot T$ the subring consisting of elements of the form $s_1t_1+\dots+s_nt_n, s_i\in S,t_i\in T.$ We will be interested in the following subrings of the ring of power series on two variables 
\begin{equation}
  R[x]^l\cdot R[x]^l \ \subset \ R\llb x \rrb \cdot R \llb y \rrb   \  \subset  \  R\llb x,y \rrb.
\end{equation}
The subring  
$R\llb x\rrb\cdot R\llb y\rrb$
is known as the subring of series of finite rank. It has a subring $\Sym(R\llb x\rrb\cdot R\llb y\rrb)$ of power series $F(x,y)$ having a finite rank and satisfying $F(x,y) = F(y,x)$.

The aim of this section is to prove the following technical theorem about integral power series which will play a key role in the proof of the fact that $H_2( \widehat{F} )$ is not cotorsion.

\begin{theorem}
\label{th:power_series_cotorsion} 
Assume that $P(x,y) = U(x) - y V(x)\in \ZZ[x,y]$ is a polynomial satisfying $P(x,y) = \pm P(y,x)$. Assume moreover that $\frac{U(x)}{V(x)}\in \ZZ[x]^l\smallsetminus \ZZ$.
Then the group 
\begin{equation}
\frac{\ZZ\llb x \rrb \cdot \ZZ\llb y \rrb}{
(U(x)-V(x)y)\cdot \ZZ\llb x \rrb \cdot \ZZ\llb y \rrb + \ZZ[x]^l\cdot \ZZ[x]^l + \Sym(\ZZ\llb x\rrb\cdot \ZZ\llb y\rrb)
}
\end{equation}
is not cotorsion. 
\end{theorem}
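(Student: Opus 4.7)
The plan is to exhibit an explicit nonzero homomorphism from the quotient group in question, call it $G$, into a countable torsion-free abelian group such as $\ZZ[x]^l/\ZZ[x]$. That target is countable, torsion-free (by Gauss's lemma, since denominators in $\ZZ[x]^l$ have constant term $\pm 1$, so $\ZZ[x]^l \cap \mathbb{Q}[x] = \ZZ[x]$) and nonzero; any countable torsion-free cotorsion group is $0$, so a nonzero homomorphism from $G$ into such a group rules out cotorsion of $G$.

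The construction centers on the substitution $y \mapsto \phi(x) := U(x)/V(x) \in \ZZ[x]^l$, which is legitimate because the hypothesis $U/V \in \ZZ[x]^l$ forces $V(0) \in \{\pm 1\}$. By the symmetry $P(x,y) = \pm P(y,x)$ of $P = U(x) - y V(x)$, both polynomial values $P(x,\phi(x))$ and $P(\phi(x),x)$ vanish. Define
\[
\mu \colon \ZZ\llb x\rrb \cdot \ZZ\llb y\rrb \longrightarrow \ZZ\llb x\rrb,
\qquad
\mu(F) := F(x,\phi(x)) \mp F(\phi(x),x),
\]
with sign chosen so that $\mu$ vanishes on the symmetric subring (in the $+$ case) or on the antisymmetric subring (in the $-$ case). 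Automatically $\mu$ then also vanishes on the ideal $(U(x)-V(x)y)\cdot(\ZZ\llb x\rrb \cdot \ZZ\llb y\rrb)$, and since $\ZZ[x]^l$ is closed under composition with $\phi$, the rational-product subring $\ZZ[x]^l \cdot \ZZ[y]^l$ is mapped into $\ZZ[x]^l$. A preliminary computation (setting $y=0$ in the symmetry relation) pins down $U$ and $V$ as linear polynomials with related coefficients; in the $-$ case this forces $U(0) = 0$, so $\phi(0) = 0$ and substitution into $\ZZ\llb y\rrb$ is unambiguous, while in the $+$ case a minor recentering may be needed to handle $\phi(0) \neq 0$.

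The naive composite of $\mu$ with the projection $\ZZ\llb x\rrb \twoheadrightarrow \ZZ\llb x\rrb / \ZZ[x]^l$ gives a homomorphism $G \to \ZZ\llb x\rrb/\ZZ[x]^l$, but this target is a quotient of the cotorsion group $\ZZ\llb x\rrb$, hence itself cotorsion, and cannot witness non-cotorsion of $G$. The key refinement is to restrict attention to the subgroup of $G$ consisting of classes with $\mu(F) \in \ZZ[x]^l$ (strictly larger than the image of $\ZZ[x]^l \cdot \ZZ[y]^l$), and to compose with the projection $\ZZ[x]^l \twoheadrightarrow \ZZ[x]^l/\ZZ[x]$. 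The hypothesis $U/V \notin \ZZ$, i.e.\ that $\phi$ is non-constant, is then used to exhibit a specific finite-rank element $F \in \ZZ\llb x\rrb\cdot\ZZ\llb y\rrb$ (for instance built from a rational power series such as $1/(1-\phi(x))$) whose image in $\ZZ[x]^l/\ZZ[x]$ is nonzero.

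The main obstacle is pinning down the correct non-cotorsion target and verifying nontriviality: every quotient of $\ZZ\llb x\rrb$ is cotorsion, so the argument must exploit the finite-rank condition defining $\ZZ\llb x\rrb\cdot\ZZ\llb y\rrb$ essentially, rather than factoring through $\ZZ\llb x\rrb$ in a naive way. The remaining verification reduces to a careful bookkeeping of how the ideal, rational-product, and symmetric subgroups interact under the substitution $y \mapsto \phi(x)$, with $U/V \notin \ZZ$ supplying the needed nontriviality of $\phi$.
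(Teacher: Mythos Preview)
Your substitution idea is natural, but the argument has several genuine gaps and is very far from the paper's method.

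\textbf{The logical structure breaks.} You first propose a homomorphism out of $G$, then concede that the naive target $\ZZ\llb x\rrb/\ZZ[x]^l$ is cotorsion, and as a fix you \emph{restrict to a subgroup} $G_0=\{[F]\in G:\mu(F)\in\ZZ[x]^l\}$. But showing that $G_0$ is not cotorsion says nothing about $G$: subgroups of cotorsion groups need not be cotorsion (e.g.\ $\ZZ\subset\widehat{\ZZ}$). So even if everything downstream worked, the conclusion would not follow.

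\textbf{The criterion is false and the map is ill-defined.} The assertion ``any countable torsion-free cotorsion group is $0$'' fails: $\QQ$ is countable, torsion-free, and cotorsion. You would need the target to be \emph{reduced}, and $\ZZ[x]^l/\ZZ[x]$ is not (for instance $1/(1-2x)$ is divisible by every power of $2$, since $1/(1-2x)\equiv 2^k\cdot x^k/(1-2x)\bmod \ZZ[x]$). Moreover the map $G_0\to\ZZ[x]^l/\ZZ[x]$ is not well-defined: writing $\sigma(h)(x)=h(\phi(x))$ one has $\mu\bigl(f(x)g(y)\bigr)=f\,\sigma(g)-\sigma(f)\,g$, so $\mu(\ZZ[x]^l\cdot\ZZ[y]^l)=(1-\sigma)\ZZ[x]^l$, which is \emph{not} contained in $\ZZ[x]$ (already $\mu(y)=\phi(x)-x\notin\ZZ[x]$). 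Finally, in the antisymmetric case one checks that $P(x,y)=\pm(x-y)$, hence $\phi(x)=x$ and $\mu\equiv 0$; and the side remark that $\ZZ\llb x\rrb$ is cotorsion is incorrect ($\ZZ\llb x\rrb\cong\prod\ZZ$ and $\ZZ$ is not cotorsion), though the quotient $\ZZ\llb x\rrb/\ZZ[x]^l$ is indeed cotorsion via Hulanicki. There is also a structural reason the substitution cannot succeed: $\mu(F)$ always satisfies $\sigma(\mu(F))=-\mu(F)$, so the entire image of $\mu$ lives in the $\sigma$-anti-invariants of $\ZZ\llb x\rrb$, and any induced map from $G$ factors through a one-variable object where the finite-rank subtlety that produces non-cotorsion has been erased.

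\textbf{What the paper does instead.} Rather than mapping \emph{out} of $G$, the paper maps \emph{into} it: one has the short exact sequence
\[
0\longrightarrow G \longrightarrow \frac{\ZZ\llb x,y\rrb}{S}\longrightarrow \frac{\ZZ\llb x,y\rrb}{\ZZ\llb x\rrb\cdot\ZZ\llb y\rrb}\longrightarrow 0,
\]
whose right-hand term is torsion-free (Lemma~\ref{lemma:torsionfree}). Non-cotorsion of $G$ then follows from non-splitting of this sequence, and \emph{that} is where the real work lies. The paper constructs an explicit $F\in\ZZ\llb x,y\rrb$ (Lemma~\ref{lemma:existence-of-sieve}) whose class on the right is divisible by every prime, yet no lift to the middle term is divisible by all primes; the obstruction for each prime $p$ is detected by the ``sieve'' mechanism (Definition~\ref{defin:sieve}, Lemma~\ref{lemma:sieve-vs-rank}) after reducing mod $p$ and working in $(\mathbb{F}_p\llp x\rrp)\llb y\rrb$. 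This is where the hypothesis $U/V\notin\ZZ$ is genuinely used (Lemma~\ref{lemma:powers-of-UoverVmodp}), and it is an essentially two-variable, mod-$p$ argument---not a substitution into one variable.
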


\subsection{Power series of finite rank}
Let 
$A$ 
be a commutative ring and 
$R$ be its subring. 
Then $A$ has a natural structure of $R$-algebra. 
We consider the following subring in the ring of power series in one variable 
$x$ over $A$ 
\begin{equation}
A\cdot R\llb x \rrb \subseteq A\llb x \rrb, 
\end{equation}
which is the product of the subrings $A$ and $R\llb x \rrb$ of $A\llb x \rrb.$

\begin{lemma}\label{lemma:finite_rank}
Let $R\subseteq A$ be an extension of commutative rings and  $F=\sum f_ix^i$ be a power series from $A \llb x \rrb.$ Then the following statements are equivalent. 
\begin{enumerate}
\item $F\in A\cdot R\llb x\rrb;$ 
\item the elements $f_0,f_1,\dots $ lie in a finitely generated $R$-submodule of  $A$.
\end{enumerate}
\end{lemma}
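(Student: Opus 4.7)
The plan is to prove both implications directly from the definition of $A\cdot R\llb x\rrb$ as the subring of $A\llb x\rrb$ generated by finite $A$-linear combinations of power series with coefficients in $R$.

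For $(1)\Rightarrow(2)$, I would take an arbitrary presentation $F=\sum_{j=1}^n a_j G_j$ with $a_j\in A$ and $G_j=\sum_i g_{j,i}x^i\in R\llb x\rrb$. Reading off the coefficient of $x^i$ gives $f_i=\sum_{j=1}^n g_{j,i}\,a_j$, so every $f_i$ lies in the $R$-submodule $M=Ra_1+\dots+Ra_n$ of $A$, which is finitely generated by construction.

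For $(2)\Rightarrow(1)$, assume $\{f_i\}_{i\geq 0}$ is contained in a finitely generated $R$-submodule $M\subseteq A$ with generators $a_1,\dots,a_n$. For each $i\geq 0$ separately, pick any representation $f_i=\sum_{j=1}^n r_{j,i}\,a_j$ with $r_{j,i}\in R$. There is no compatibility issue between different $i$'s because the resulting power series $G_j:=\sum_i r_{j,i}x^i$ is automatically well-defined in $R\llb x\rrb$ (coefficients lie in $R$, and we impose no convergence condition). Then
\begin{equation}
\sum_{j=1}^n a_j G_j=\sum_i\Big(\sum_{j=1}^n r_{j,i}a_j\Big)x^i=\sum_i f_ix^i=F,
\end{equation}
which exhibits $F$ as an element of $A\cdot R\llb x\rrb$.

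There is essentially no obstacle here: the whole statement is bookkeeping about how finite-rank $A$-linear combinations interact with coefficient extraction. The only point worth flagging explicitly in the write-up is that in the second direction one does not need the $R$-submodule $M$ itself to be free or the representations $f_i=\sum_j r_{j,i}a_j$ to be canonical; any choice of coefficients, made pointwise in $i$, assembles into well-defined power series $G_j\in R\llb x\rrb$ because $R\llb x\rrb$ is just the set of all formal sequences of elements of $R$.
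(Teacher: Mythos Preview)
Your proof is correct and follows essentially the same approach as the paper's: both directions are handled by the same bookkeeping you describe, with the paper phrasing $(1)\Rightarrow(2)$ slightly more abstractly (defining the set $\mathfrak{A}$ of series whose coefficients lie in a finitely generated $R$-submodule and checking it is an additive subgroup containing each product $aG$), while you read off the coefficients of a presentation $F=\sum a_jG_j$ directly. The $(2)\Rightarrow(1)$ direction is identical in both.
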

\begin{proof} For a power series $F=\sum f_ix^i$ we denote by $\langle F \rangle \subseteq A $ the $R$-submodule of $A$ generated by $f_0, f_1,\dots.$ We also denote by $  \mathfrak{A} \subseteq A\llb x \rrb$ the set of power series such that $\langle F \rangle$ is contained in a finitely generated $R$-submodule of $A$. Note that $\langle F+G \rangle \subseteq \langle F \rangle + \langle G \rangle.$ It follows that $\mathfrak{A}$ is an additive subgroup of $A\llb x \rrb.$ If $F=aG$ for some  $a\in A, G \in R \llb x \rrb,$ then $\langle F \rangle$ is a submodule of $Ra,$ and hence, $F\in \mathfrak{A}.$ Therefore, using that $\mathfrak{A}$ is an additive subgroup, we obtain $A\cdot R \llb x \rrb \subseteq \mathfrak{A}.$ 

Prove that $\mathfrak{A}\subseteq A\cdot R \llb x \rrb.$  Take a power series  $F=\sum f_ix^i\in \mathfrak{A}.$ Then there exists a finite collection  $a_1,\dots, a_n\in A$ such that $f_i=\sum_{j=1}^n r_{i,j} a_j$ for some $r_{i,j}\in R.$ Therefore
$
F= \sum_{i=1}^\infty \sum_{j=1}^n  r_{i,j} a_j x^i  = \sum_{j=1}^n a_j G_j,    
$
where $G_j=\sum_{i=1}^\infty r_{i,j}x^i.$
\end{proof}

If $R=k$ is a field, then we can define the $k$-rank of a power series $F=\sum f_ix^i\in A\llb x \rrb$ as the dimension
\[
\rk_k(F)=\dim_k\spn_k\{ f_0,f_1,\dots \}.
\]

\begin{corollary} Let $k$ be a field, $A$ be a commutative $k$-algebra, and  $F=\sum f_ix^i$ be a power series from $ A \llb x \rrb.$ Then $F\in A\cdot k\llb x \rrb$ if and only if $\rk_k(F)< \infty.$
\end{corollary}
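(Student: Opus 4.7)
The plan is to deduce this corollary immediately from Lemma \ref{lemma:finite_rank} specialized to $R = k$. The only ingredient needed beyond the lemma is the observation that, over a field $k$, a $k$-submodule of $A$ is finitely generated if and only if it is finite-dimensional as a $k$-vector space; consequently, a subset $S \subseteq A$ is contained in some finitely generated $k$-submodule of $A$ if and only if the subspace $\spn_k(S)$ that it spans is itself finite-dimensional (take either $S \subseteq M$ finite-dimensional, in which case $\spn_k(S) \subseteq M$, or $M = \spn_k(S)$ itself).

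For the forward implication, suppose $F \in A \cdot k\llb x \rrb$. Then Lemma \ref{lemma:finite_rank} produces a finitely generated $k$-submodule $M \subseteq A$ containing all coefficients $f_0, f_1, \dots$; since $M$ is then a finite-dimensional $k$-vector space, we conclude
\[
\rk_k(F) \;=\; \dim_k \spn_k\{f_0, f_1, \dots\} \;\leq\; \dim_k M \;<\; \infty.
\]
Conversely, if $\rk_k(F) < \infty$, then $\spn_k\{f_0,f_1,\dots\}$ is itself a finite-dimensional, hence finitely generated, $k$-submodule of $A$ containing every coefficient of $F$, and Lemma \ref{lemma:finite_rank} yields $F \in A \cdot k\llb x \rrb$.

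There is essentially no obstacle: the corollary is a direct translation of Lemma \ref{lemma:finite_rank} via the correspondence between finitely generated modules over a field and finite-dimensional vector spaces.
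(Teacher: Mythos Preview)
Your proposal is correct and matches the paper's intent: the paper gives no proof for this corollary, treating it as an immediate consequence of Lemma~\ref{lemma:finite_rank} via the standard identification of finitely generated $k$-modules with finite-dimensional $k$-vector spaces. There is nothing to add.
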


\begin{corollary} \label{cor:finite_rank}
Let $F(x,y)=\sum f_i(x)y^i$ be a power series from $\ZZ\llb x,y \rrb.$ Then $F\in \ZZ\llb x \rrb \cdot \ZZ \llb y \rrb  $ if and only if the abelian group generated by  $f_0(x),f_1(x),\dots \in \ZZ\llb x \rrb $ is finitely generated. 
\end{corollary}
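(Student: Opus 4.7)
The plan is to deduce this as a direct specialization of Lemma \ref{lemma:finite_rank}. First I would identify $\ZZ\llb x,y\rrb$ with the iterated power series ring $A\llb y\rrb$, where $A=\ZZ\llb x\rrb$: a power series $F(x,y)\in \ZZ\llb x,y\rrb$ corresponds to $\sum_{i\geq 0}f_i(x)\,y^i\in A\llb y\rrb$ with $f_i(x)\in A$. Under this identification the subring $\ZZ\llb x\rrb\cdot \ZZ\llb y\rrb\subseteq \ZZ\llb x,y\rrb$ becomes exactly $A\cdot \ZZ\llb y\rrb\subseteq A\llb y\rrb$, where $R=\ZZ$ is regarded as the subring of constant series inside $A$.

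Then I would apply Lemma \ref{lemma:finite_rank} to the extension $R=\ZZ\subseteq A=\ZZ\llb x\rrb$ and the power series $F=\sum f_i(x)y^i\in A\llb y\rrb$. The lemma gives: $F\in A\cdot R\llb y\rrb$ if and only if the coefficients $f_0(x),f_1(x),\dots$ lie in a finitely generated $R$-submodule of $A$, that is, in a finitely generated abelian subgroup of $\ZZ\llb x\rrb$.

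Finally I would eliminate the difference between the phrasings "lie in a finitely generated abelian subgroup" and "generate a finitely generated abelian subgroup". Any subgroup of a finitely generated abelian group is itself finitely generated, so if $f_0,f_1,\dots$ are contained in some finitely generated subgroup of $\ZZ\llb x\rrb$ then the subgroup they generate is also finitely generated; the converse is trivial. Combining these observations yields the stated equivalence. I do not anticipate a genuine obstacle here: the only small point of care is the identification $\ZZ\llb x,y\rrb\cong \ZZ\llb x\rrb\llb y\rrb$, but this is standard and does not require additional argument.
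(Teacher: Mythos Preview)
Your proposal is correct and is exactly the intended argument: the paper states this corollary without proof, as an immediate specialization of Lemma~\ref{lemma:finite_rank} with $R=\ZZ$ and $A=\ZZ\llb x\rrb$, using the identification $\ZZ\llb x,y\rrb\cong A\llb y\rrb$. Your added remark reconciling ``lie in a finitely generated subgroup'' with ``generate a finitely generated subgroup'' via Noetherianity of $\ZZ$ is the only point worth making explicit, and you handle it correctly.
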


\begin{lemma}\label{lemma:torsionfree}
The abelian group $\ZZ\llb x,y \rrb/ ( \ZZ\llb x \rrb \cdot \ZZ\llb y \rrb  )$ is torsion free. 
\end{lemma}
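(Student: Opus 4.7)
The plan is to reduce torsion-freeness to a statement about finite generation of $\ZZ$-submodules of $\ZZ\llb x\rrb$ via Corollary~\ref{cor:finite_rank}, and then exploit the fact that $\ZZ\llb x\rrb$ is itself torsion free as an abelian group.

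First I would translate the problem. Write $F(x,y) = \sum_{i\geq 0} f_i(x)\, y^i \in \ZZ\llb x,y\rrb$, where the $f_i(x)\in\ZZ\llb x\rrb$ are uniquely determined. By Corollary~\ref{cor:finite_rank}, membership of $F$ in $\ZZ\llb x\rrb\cdot\ZZ\llb y\rrb$ is equivalent to finite generation of the $\ZZ$-submodule
\begin{equation}
M := \langle f_0(x), f_1(x), \dots \rangle \subseteq \ZZ\llb x\rrb.
\end{equation}
So torsion-freeness of the quotient amounts to the implication: if $n\geq 1$ and the coefficient submodule of $nF$ is finitely generated, then $M$ is finitely generated.

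Next, since expansion in powers of $y$ is unique, the coefficients of $nF = \sum_i n f_i(x)\, y^i$ generate exactly the submodule $nM \subseteq \ZZ\llb x\rrb$. The ring $\ZZ\llb x\rrb$ is torsion free as a $\ZZ$-module, so multiplication by $n$ is injective and restricts to an isomorphism of abelian groups $M \xrightarrow{\,n\,} nM$. Assuming $nM$ is finitely generated (which is Corollary~\ref{cor:finite_rank} applied to $nF$), we conclude $M \cong nM$ is finitely generated, and a second application of Corollary~\ref{cor:finite_rank} gives $F \in \ZZ\llb x\rrb\cdot\ZZ\llb y\rrb$.

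I do not anticipate a real obstacle: once Corollary~\ref{cor:finite_rank} is available, the argument is essentially formal, using only torsion-freeness of $\ZZ\llb x\rrb$ itself. The one point to be careful about is identifying the coefficient submodule of $nF$ with $nM$ \emph{exactly} (and not merely with some finitely generated group containing it), so that injectivity of multiplication by $n$ on $M$ lets one transfer finite generation back from $nM$ to $M$.
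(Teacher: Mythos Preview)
Your argument is correct and is essentially identical to the paper's own proof: both reduce via Corollary~\ref{cor:finite_rank} to showing that the abelian subgroup generated by $f_0(x),f_1(x),\dots$ is isomorphic to the one generated by $nf_0(x),nf_1(x),\dots$, which follows from torsion-freeness of $\ZZ\llb x\rrb$. Your write-up is slightly more explicit about why $M\cong nM$, but the approach is the same.
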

\begin{proof}
Corollary \ref{cor:finite_rank} implies that a power series $F=\sum_{i=0}^\infty f_i(x)y^i \in \ZZ\llb x,y \rrb$ lies in $\ZZ\llb x \rrb \cdot  \ZZ\llb y \rrb  $ if and only if the abelian group generated by $f_0(x),f_1(x),\dots $ is finitely generated. For any $n$ the abelian group generated by $f_0(x),f_1(x),\dots$ is isomorphic to the abelian group  generated by  $nf_0(x),nf_1(x),\dots$. Hence $nF \in \ZZ\llb x \rrb \cdot  \ZZ\llb y \rrb $ implies $F\in \ZZ\llb x \rrb \cdot  \ZZ\llb y \rrb.$ The assertion follows. 
\end{proof}

\subsection{Sieves}
\label{subsec:preliminary-lemmas}

In this subsection we assume that $k\subset K$ is a proper field extension and $A$ is a $K$-algebra
\begin{equation}
  k \subset K \subset A.
\end{equation}

\begin{definition}
  \label{defin:sieve}
  Let $n,d$ be positive integers and $F = \sum f_jx^j\in A\llb x\rrb$ be a formal power series. We say that $F$ has an $n,d$-\emph{sieve} if there exists $m\geq 0$ such that
  \begin{itemize}
    \item for any $l = 0,1,\dots,n$ and $i = 1,\dots,d-1$ we have $f_{m + ld + i} = 0$ and
    \item the coefficients $f_{m+d}, f_{m+2d},\dots, f_{m+nd}\in A$ are linearly independent over $K$.
  \end{itemize}
\end{definition}

As we will see below, the concept of a sieve provide a measurement that allows to indicate if a formal series belongs to the denominator in Theorem~\ref{th:power_series_cotorsion}. More formally, we have the following lemma, which is the central ingredient in the proof of the theorem.

\begin{lemma}
  \label{lemma:sieve-vs-rank}
  Assume that $\alpha,\beta\in K^\times$ are such that $1,\alpha\beta^{-1}, (\alpha\beta^{-1})^2,\dots$ are linearly independent over $k$ and $H,G\in A\llb x\rrb$ are two series such that
  \begin{enumerate}
    \item\label{enum:sieve-vs-rank-rkH} $k$-rank of $H$ is at most $d-1$;
    \item\label{enum:sieve-vs-rank-rkG} $K$-rank of $G$ is at most $n-1$.
  \end{enumerate}
  Then the series $F$ defined as
  \[
    F = (\beta-\alpha x)\cdot H + G
  \]
  cannot have an $n,d$-sieve.
\end{lemma}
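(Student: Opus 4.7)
The plan is to exploit the vanishing positions of the sieve to force the coefficients $h_{m+ld}$ of $H$ to lie in the $K$-span $U$ of the coefficients of $G$, which will then confine the sieve-value coefficients $f_{m+ld}$ themselves to $U$ — a $K$-space too small to accommodate $n$ linearly independent vectors, producing the required contradiction.

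Writing $\rho := \alpha\beta^{-1}$ and using the identity $f_j = \beta h_j - \alpha h_{j-1} + g_j$, I would first exploit the holes $f_{m+ld+i}=0$ (for $l\in\{0,\dots,n\}$ and $i\in\{1,\dots,d-1\}$) to iterate the recurrence $h_j = \rho h_{j-1} - \beta^{-1} g_j$. Iterating $i$ times starting from $h_{m+ld}$ yields, for each such $l$ and each $i\in\{0,\dots,d-1\}$, an identity
\[
  \rho^i h_{m+ld} \;=\; h_{m+ld+i} + \beta^{-1} \sum_{j=1}^{i} \rho^{i-j} g_{m+ld+j},
\]
whose right hand side lies in $V + U$, where $V := \spn_k\{h_j\}\subseteq A$ has $k$-dimension at most $d-1$ by assumption~(\ref{enum:sieve-vs-rank-rkH}) and $U := \spn_K\{g_j\}\subseteq A$ has $K$-dimension at most $n-1$ by~(\ref{enum:sieve-vs-rank-rkG}).

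Next I would pass to the $K$-vector space $A/U$ (well-defined since $U$ is $K$-stable). The images $\overline{h_{m+ld}},\ \rho\,\overline{h_{m+ld}},\ \dots,\ \rho^{d-1}\overline{h_{m+ld}}$ all lie in the $k$-subspace $\bar V := (V+U)/U$, whose $k$-dimension is at most $d-1$. Hence these $d$ vectors are $k$-linearly dependent, producing a nontrivial element $c_l := \sum_{i=0}^{d-1} c_{l,i}\rho^i \in K$ that annihilates $\overline{h_{m+ld}}$. The transcendence hypothesis on $\rho$ ensures $c_l\neq 0$, so $c_l$ is a unit of the field $K$ acting on the $K$-module $A/U$, forcing $\overline{h_{m+ld}}=0$, i.e.\ $h_{m+ld}\in U$, for every $l\in\{0,\dots,n\}$.

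To conclude, one more application of the recurrence to rewrite $h_{m+ld-1}$ gives, for $l = 1,\dots,n$,
\[
  f_{m+ld} \;=\; \beta h_{m+ld} - \alpha^d\beta^{-(d-1)} h_{m+(l-1)d} + G_{l-1},
  \qquad G_{l-1}\in U.
\]
Combined with $h_{m+ld},\,h_{m+(l-1)d}\in U$ and the $K$-stability of $U$, this shows $f_{m+ld}\in U$ for all $l=1,\dots,n$. But $U$ has $K$-dimension at most $n-1$, so these $n$ elements cannot be $K$-linearly independent, contradicting the defining property of an $n,d$-sieve. The delicate step is the quotient-and-transcendence argument, where the small $k$-dimension of $V$ and the $k$-linear independence of $\{\rho^i\}_{i\geq 0}$ are combined to produce an actual vanishing modulo $U$; everything else is careful bookkeeping with the sieve recurrence.
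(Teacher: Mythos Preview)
Your proof is correct and is essentially the paper's argument, repackaged: where the paper inverts $(1-\alpha x)$ and tracks cumulative sums $\sum_{s+t=j} f_s\alpha^t$ modulo the $K$-span of the $g_j$'s, you run the equivalent one-step recurrence $h_j=\rho h_{j-1}-\beta^{-1}g_j$ and pass explicitly to the quotient $A/U$. The key moves coincide exactly --- your $d$ elements $\rho^i\overline{h_{m+ld}}$ in $\bar V$ are precisely the images $\overline{h_{m+ld+i}}$ (by your iterated formula), so your $k$-linear dependence is the same one the paper extracts from $\rk_k H\le d-1$; the resulting nonzero $K$-scalar $c_l$ is the paper's $\lambda_l$; and both arguments finish by trapping $f_{m+d},\dots,f_{m+nd}$ in the $(n-1)$-dimensional $K$-span of the $g_j$'s. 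Your quotient formulation is arguably cleaner bookkeeping, but there is no substantive difference in strategy.
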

\begin{proof}
  Without loss of generality we can assume that $\beta=1$, so that
  \begin{equation}
    \label{eq:sieve-vs-rank1}
    F = (1-\alpha x)\cdot H + G.
  \end{equation}
  The element $(1-\alpha x)\in K\llb x\rrb$ is invertible with the inverse given by $(1-\alpha x)^{-1} = \sum\limits_{j\geq 0}\alpha^jx^j$. Multiplying~\eqref{eq:sieve-vs-rank1} by this inverse we obtain
 \begin{equation}
  \left(\sum_{j=0}^\infty \alpha^j x^j\right)\cdot F =  H + \left(\sum_{j=0}^\infty \alpha^j x^j\right)\cdot G.   
 \end{equation}
Substituting $F = \sum\limits_{j\geq 0}f_jx^j$, $H = \sum\limits_{j \geq 0}h_jx^j$ and $G = \sum\limits_{j\geq 0}g_jx^j$ one gets 
  \begin{equation}
    \label{eq:sieve-vs-rank2}
    \sum_{j\geq 0}\bigl(\sum_{s+t=j}f_s \alpha^t\bigr) \cdot x^j = \sum_{j\geq 0}\bigl( h_j + \sum_{s+t = j}g_s \alpha^t \bigr)\cdot x^j.
  \end{equation}
  Let $V\subset A$ denote the $K$-span of the set $g_0,g_1,g_2,\dots\in A$. Due to the assumption~\ref{enum:sieve-vs-rank-rkG} of the lemma we have $\dim_k V \leq n-1$.

  Recall that $\alpha\in K$, hence $\sum_{s+t = j}g_s \alpha^t\in V$ for any $j$. Using this observation and~\eqref{eq:sieve-vs-rank2} we find out the following relation for any $j\geq 0$:
  \begin{equation}
    \label{eq:sieve-vs-rank3}
    \sum_{s+t=j}f_s \alpha^t - h_j\in V.
  \end{equation}

  Assume now that $F$ has an $n,d$-sieve and let $m$ be the corresponding integer from Definition~\ref{defin:sieve}. Set $a = \sum_{s+t = m}f_s \alpha^t$. The definition of the sieve together with~\eqref{eq:sieve-vs-rank3} for $j = m,m+1,\dots,m+n^2+n-1$ boil down to the following:
  \begin{equation}
    \label{eq:sieve-vs-rank4}
    \begin{split}
     & \alpha^i\cdot a - h_{m+i}\in V\quad \forall i = 0,1,\dots d-1,\\
     & \alpha^{i+d}\cdot a + \alpha^i\cdot f_{m+d} - h_{m+d+i}\in V\quad \forall i = 0,1,\dots d-1,\\
     & \vdots\\
     & \alpha^{i+nd}\cdot a + \alpha^{i+(n-1)d}\cdot f_{m+d} + \ldots + \alpha^if_{m+nd} - h_{m+nd+i}\in V \quad \forall i = 0,1,\dots d-1.
    \end{split}
  \end{equation}
  Due to the assumption~\ref{enum:sieve-vs-rank-rkH} of the lemma we have that for any $m'$ the coefficients $h_{m'}, h_{m'+1},\dots,h_{m'+d-1}$ are linearly dependent over $k$. For any $l = 0,1,\dots, n$ let $b_0h_{m+ld} + \ldots + b_{d-1}h_{m+ld+d-1}=0$ be a non-trivial relation with $b_0,b_1,\dots,b_{d-1}\in k$ and set
  \[
    \lambda_l = b_0+ \alpha b_1 + \alpha^2b_2 + \ldots + \alpha^{d-1}b_{d-1}.
  \]
  Due to the assumption on $\alpha,\beta$ (recall that we set $\beta = 1$) we have $\lambda_l\in K^\times$ for any $l$. Moreover, relations~\eqref{eq:sieve-vs-rank4} imply (line-by-line) relations
  \begin{equation}
    \label{eq:sieve-vs-rank5}
    \begin{split}
      & \lambda_0\cdot a\in V,\\
      & \alpha^d\lambda_1\cdot a + \lambda_1\cdot f_{m+n}\in V,\\
      & \vdots\\
      & \alpha^{nd}\lambda_n\cdot a + \alpha^{(n-1)d}\lambda_n\cdot f_{m+d} + \ldots + \lambda_n\cdot f_{m+nd}\in V.
    \end{split}
  \end{equation}
  This immediately implies that $f_{m+d}, f_{m+2d},\dots, f_{m+nd}\in V$. But on the other hand $f_{m+d}, f_{m+2d},\dots, f_{m+nd}$ are linearly independent over $K$ due to Definition~\ref{defin:sieve} of an $n,d$-sieve, while $\dim_k V\leq n-1$ by the construction. Thus we obtain a contradiction.
\end{proof}

\subsection{Existence of a divisible series with an arbitrary sieve} Let $p$ be a prime number and $\mathbb{F}_p$ be a field of order $p$.  Recall that $\mathbb{F}_p(x)$ denotes the field of rational functions over $\mathbb{F}_p$ and $\mathbb{F}_p\llp x\rrp$ denotes the \emph{field} of Laurent series of one variable $x$. In this section we will always apply Definition~\ref{defin:sieve} of an $n,d$-sieve for 
\begin{equation}
k=\mathbb{F}_p, \hspace{1cm} K=\mathbb{F}_p(x), \hspace{1cm}
A = \mathbb{F}_p\llp x\rrp
\end{equation}
and for a power series in two variables from
\begin{equation}
\mathbb{F}_p\llb x,y \rrb \subset  (\mathbb{F}_p\llp x\rrp) \llb y \rrb.
\end{equation}
First, we need a tool to verify a linear independence over $K$.

\begin{lemma}
\label{lemma:linearly_indep_new}
Fix a prime number $p$. Let $\mathcal{A}$ be an arbitrary set and $g_\alpha(x) = \sum_{i\in \mathbb Z}a_{\alpha,i}x^i\in \mathbb{F}_p\llp x\rrp$ be a family of power series enumerated by elements $\alpha\in \mathcal{A}$. Assume that the following property holds for this family: for any $N>0$ and $\alpha\in \mathcal{A}$ there exists $m = m(\alpha,N)\in \mathbb Z$ such that $a_{\alpha,m}\neq 0$, $a_{\beta, m} = 0$ for any $\beta\neq \alpha$ and moreover $a_{\beta,m+i} = 0$ for any $\beta$ (including $\beta = \alpha$) and $0<|i|<N$. Then the family $\{g_\alpha\}_{\alpha\in \mathcal{A}}$ is lineraly independent over $\mathbb{F}_p(x)$.
\end{lemma}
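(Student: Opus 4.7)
The plan is to argue by contradiction. Suppose there is a nontrivial $\mathbb{F}_p(x)$-linear relation among finitely many $g_{\alpha_1},\dots,g_{\alpha_r}$. Clearing denominators, we can rewrite it as $\sum_{j=1}^r p_j(x)\,g_{\alpha_j}(x) = 0$ with $p_j\in \mathbb{F}_p[x,x^{-1}]$ Laurent polynomials, not all zero. Reindexing, assume $p_1\ne 0$. Let $M$ be an integer with the property that every nonzero coefficient of every $p_j$ lies in the window of indices $[-M,M]$, i.e.\ $p_j(x)=\sum_{|i|\le M} b_{j,i}x^i$.

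Now apply the hypothesis to $\alpha=\alpha_1$ with $N=2M+1$, obtaining an integer $m=m(\alpha_1,N)$. The point is that in the window of indices $(m-N,m+N)$, only one coefficient among all the $g_{\alpha_j}$'s survives: namely $a_{\alpha_1,m}\ne 0$, while $a_{\alpha_j,m+s}=0$ for every $|s|<N$ in all other cases (for $\alpha_j\ne\alpha_1$, all coefficients in the window vanish; for $\alpha_1$ itself, only the central one does not). I would then expand the coefficient of $x^{m+k}$ in $\sum_j p_j g_{\alpha_j}$ for each $k$ with $|k|\le M$ as
\begin{equation}
[x^{m+k}]\sum_j p_j(x)g_{\alpha_j}(x) \;=\; \sum_j\sum_{|i|\le M} b_{j,i}\, a_{\alpha_j,\,m+k-i}.
\end{equation}
Because $|k-i|\le 2M<N$, every term with $\alpha_j\ne\alpha_1$ is killed by the second bullet of the sieve hypothesis, while for $\alpha_1$ only the index $k-i=0$ contributes. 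Hence the displayed sum collapses to $b_{1,k}\,a_{\alpha_1,m}$.

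Since the relation is zero, this forces $b_{1,k}\,a_{\alpha_1,m}=0$ for every $|k|\le M$, and therefore $b_{1,k}=0$ for all such $k$ because $a_{\alpha_1,m}\ne 0$ in a field. By the choice of $M$ this means $p_1=0$, a contradiction. The only real step that requires care is the bookkeeping of indices — namely choosing $N$ strictly larger than $2M$ so that the ``cross terms'' from the convolution fall into the zero region of the sieve — but once $N=2M+1$ is fixed, the coefficient extraction is mechanical and the contradiction is immediate. I do not expect any serious obstacle; the hypothesis is tailored so that the Laurent polynomial coefficients $b_{1,k}$ can be read off one by one from the product.
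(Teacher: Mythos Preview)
Your proof is correct and follows essentially the same approach as the paper: assume a nontrivial relation, clear denominators to polynomial (or Laurent polynomial) coefficients, then use the isolation property of the hypothesis with $N$ large compared to the degrees to pick out a single nonzero coefficient. The only cosmetic difference is that the paper extracts one coefficient (at $x^{m+d_n}$, using the leading coefficient of one $r_j$) to obtain a nonzero value in a vanishing series, whereas you extract the whole block of coefficients $x^{m+k}$, $|k|\le M$, to force $p_1=0$; both yield the contradiction immediately.
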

\begin{proof}
Assume the contrary. Then one can find $\alpha_1,\dots,\alpha_n\in \mathcal{A}$ and non-zero rational functions $r_1(x),\dots,r_n(x)\in \mathbb{F}_p(x)$ such that
\[
r_1(x)g_{\alpha_1}(x) + r_2(x)g_{\alpha_2}(x) + \ldots + r_n(x)g_{\alpha_n}(x) = 0.
\]
Note that one can assume that all $r_i$'s are polynomials (if not, just multiply the equation above by the common denominator of $r_i$'s). Let $d_i = \deg r_i$ and $N>\max\{d_1,\dots, d_n\}$. Let $m = m(\alpha_n, N)$ be the index promised by the assumption made in the lemma. Let us also write $r_n(x) = bx^{d_n} + \ldots$ where $b\neq 0$ is the leading coefficient of $r_n$. Then it is easy to see that the formal power series $r_1(x)g_{\alpha_1}(x) + r_2(x)g_{\alpha_2}(x) + \ldots + r_n(x)g_{\alpha_n(x)}$ has its coefficient at $x^{m + d_n}$ equal to $a_{\alpha_n, m}\cdot b\neq 0$ which contradicts with the fact that this series vanishes.
\end{proof}

Now we can present the main construction that will be used as an obstruction for the group from Theorem~\ref{th:power_series_cotorsion} to be a cotorsion.

\begin{lemma}
  \label{lemma:existence-of-sieve}
  There exists a power series $F\in \ZZ\llb x,y\rrb$ such that:
  \begin{itemize}
    \item the element of $\frac{\ZZ\llb x,y\rrb}{\ZZ\llb x \rrb\cdot \ZZ\llb y\rrb}$ defined by $F$ is divisible by any prime  $p$;
    \item for any prime $p$ and any $d\geq p$ the power series $F(x,y) - F(y,x)$ considered as an element of $(\mathbb{F}_p\llp x\rrp)\llb y\rrb$ has an $p,d$-sieve.
  \end{itemize}
\end{lemma}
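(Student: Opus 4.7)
The plan is to construct $F$ explicitly as an infinite sum $F=\sum_k F_k$, where each contribution $F_k$ is concentrated in a disjoint block of $y$-exponents and is designed to supply one sieve. First enumerate all pairs $(p,d)$ with $p$ prime and $d\geq p$ as $(p_k,d_k)_{k\geq 1}$. The contribution $F_k$ will have the form
\[
 F_k(x,y) \;=\; q_k \sum_{l=1}^{p_k} \psi_{p_k,l}(x)\cdot y^{m_k+l d_k},
\]
where $m_k$ is the starting position of the $k$-th block, $q_k\in\mathbb Z$ is a scaling integer ensuring divisibility, and $\{\psi_{p,l}\}_{l=1,\ldots,p}\subset\ZZ\llb x\rrb$ is a family of auxiliary series, constructed once per prime $p$, whose mod-$p$ reductions are linearly independent over $\mathbb F_p(x)$.

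I will take the $\psi_{p,l}$ to be sums of monomials $\psi_{p,l}(x)=\sum_n x^{E_{p,l,n}}$ with widely spaced exponents. The $E_{p,l,n}$ and the block positions $m_k$ are chosen by a single greedy enumeration, subject to two compatibility requirements: (i) the intervals $B_k:=[m_k,\,m_k+(p_k+1)d_k]$ are pairwise disjoint, and (ii) the set $\{E_{p,l,n}\}$ is disjoint from $\bigcup_kB_k$ and satisfies the separation hypothesis of Lemma~\ref{lemma:linearly_indep_new}. Placing each newly chosen exponent much farther from all previously chosen ones than from the largest currently relevant index guarantees both requirements and, by Lemma~\ref{lemma:linearly_indep_new}, the linear independence of $\bar\psi_{p,1},\ldots,\bar\psi_{p,p}$ over $\mathbb F_p(x)$ for every prime $p$. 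Finally, set $q_k:=\prod_{q\text{ prime},\,q\leq k,\,q\neq p_k}q$, so that $p\mid q_k$ whenever $p\leq k$ and $p\neq p_k$, while $p_k\nmid q_k$. The series $F=\sum_k F_k$ is a well-defined element of $\ZZ\llb x,y\rrb$ because $m_k\to\infty$.

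For divisibility by a fixed prime $p$ in the quotient $\ZZ\llb x,y\rrb/(\ZZ\llb x\rrb\cdot\ZZ\llb y\rrb)$, I use the criterion (readily deduced from Corollary~\ref{cor:finite_rank} by lifting an $\mathbb F_p$-basis of the mod-$p$ reductions of the $y$-coefficients of $F$) that $F$ is divisible by $p$ in the quotient if and only if the mod-$p$ reductions of its $y$-coefficients lie in a finite-dimensional $\mathbb F_p$-subspace of $\mathbb F_p\llb x\rrb$. In our construction the nonzero $y$-coefficients are the $q_k\psi_{p_k,l}$; by the choice of $q_k$, those with $p_k\neq p$ and $k\geq p$ vanish modulo $p$, while the remaining ones either come from the finitely many indices $k<p$ or lie, up to a nonzero scalar, in the $p$-dimensional $\mathbb F_p$-span of $\bar\psi_{p,1},\ldots,\bar\psi_{p,p}$. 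Hence the span is finite dimensional, as required.

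For the sieve property, fix $(p,d)=(p_k,d_k)$ and consider the block $B_k$. The $y^n$-coefficient of $F(x,y)$ for $n\in B_k$ equals $q_k\psi_{p_k,l}(x)$ when $n=m_k+l d_k$ with $1\leq l\leq p_k$, and is $0$ otherwise since the blocks are disjoint. The $y^n$-coefficient of $F(y,x)$ vanishes entirely (already in $\ZZ$) for $n\in B_k$, because for every $(k',l')$ the $x$-support of $\psi_{p_{k'},l'}$ misses $B_k$ by construction. Reducing modulo $p_k$, the coefficients of $F(x,y)-F(y,x)$ at the zero positions of the $(p_k,d_k)$-sieve at $m=m_k$ vanish, whereas at the data positions $y^{m_k+ld_k}$, $l=1,\ldots,p_k$, they are the $\mathbb F_{p_k}(x)$-linearly independent series $\bar q_k\bar\psi_{p_k,l}$ (with $\bar q_k\in\mathbb F_{p_k}^\times$). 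This is exactly the required sieve. The main technical obstacle is the simultaneous coordination of the $m_k$'s and the $E_{p,l,n}$'s so as to satisfy (i)--(ii) together with the gap condition of Lemma~\ref{lemma:linearly_indep_new}; this is a routine greedy diagonal construction that has to be spelled out carefully.
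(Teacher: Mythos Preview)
Your construction is correct and follows a genuinely different route from the paper's. The paper writes down a single closed formula,
\[
F(x,y)=\sum_{i,j\ge1}\sum_{k=0}^{\min(i,j)}k!\,x^{3^i+(k+1)i}\,y^{3^j-(k+1)j},
\]
in which the factorials $k!$ play the role of your multipliers $q_k$ (killing all but the first $p$ ``layers'' modulo each prime $p$, hence giving finite rank), and the exponential spacing $3^i$ replaces your greedy placement: for every $d\ge p$ the $y$-exponents $3^d-(k+1)d$, $k=0,\dots,p-1$, already form an arithmetic progression of step $d$ sitting inside a long gap of the remaining $y$-support, so the $(p,d)$-sieve is present automatically. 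The required independence over $\mathbb F_p(x)$ is then checked, exactly as in your argument, via Lemma~\ref{lemma:linearly_indep_new}.

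Your approach trades this arithmetic coincidence for bookkeeping: one block $B_k$ per pair $(p,d)$, one fixed family $\{\psi_{p,l}\}_{l\le p}$ per prime. This makes the origin of each property transparent (the sieve at $(p_k,d_k)$ is literally installed at block $k$; divisibility by $p$ comes from the fact that mod $p$ only finitely many $k<p$ together with the $p$ series $\bar\psi_{p,l}$ contribute) and shows that no special numerology is needed---any sufficiently sparse placement works. The cost is the greedy diagonal construction; it is indeed routine, but must be organised so that every pair $(p,l)$ is visited infinitely often (so that the gap hypothesis of Lemma~\ref{lemma:linearly_indep_new} can be met for every $N$) while all exponents $E_{p,l,n}$ stay disjoint from all blocks $B_k$, past and future. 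Both proofs ultimately rest on the same two ingredients: the finite-rank criterion (your mod-$p$ reformulation of Corollary~\ref{cor:finite_rank} is correct and easy) and Lemma~\ref{lemma:linearly_indep_new}.
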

\begin{proof}
We construct the series $F(x,y)$ explicitly. For any integers $i,k$, let us define 
\[
    s_{i,k} = 3^i + (k+1)i,\qquad t_{i,k} = 3^i - (k+1)i
\]
and set
\begin{multline}
    \label{eq:formula_for_F}
    F(x,y) = \sum_{i\geq 1}\sum_{j\geq 1}\sum_{k = 0}^{\min(i,j)}k! \cdot x^{s_{i,k}}y^{t_{j,k}} =\\
     = \sum_{j\geq 1}\sum_{k = 0}^j k!\cdot y^{t_{j,k}}\sum_{i\geq k}x^{s_{i,k}} = \sum_{i\geq 1}\sum_{k = 0}^i k!\cdot x^{s_{i,k}}\sum_{j\geq k}y^{t_{j,k}}.
\end{multline}
Put $g_k(x) = k!\cdot \sum_{i\geq k} x^{s_{i,k}}$ and $h_k(x) = k!\cdot \sum_{i\geq k} x^{t_{i,k}}$. The second equality in~\eqref{eq:formula_for_F} reads as 
\[
F(x,y) = \sum_{j\geq 1}\sum_{k = 0}^j y^{t_{j,k}}\cdot g_k(x).
\]
By the construction, for any prime $p$ the series $g_k(x)\mod p$ vanishes if $k\geq p$, which implies by Lemma~\ref{lemma:finite_rank} that $F(x,y)$ is equal to a series of a finite rank modulo $p$, so that $F$ satisfies the first part of the lemma.

Moreover, by the second and the third equality from~\eqref{eq:formula_for_F} one has
\[
    F(x,y) - F(y,x) = \sum_{i\geq 1}\sum_{k = 0}^i \big( y^{t_{i,k}}\cdot g_k(x) - y^{s_{i,k}}\cdot h_k(x) \big).
\]
Fix a prime number $p$ and $d\geq p$. Note that we have \[\ldots < s_{d-1,d-1} < t_{d,d} < t_{d,d-1} < \ldots < t_{d,0} < s_{d,0} < s_{d,1} < \ldots < s_{d,d} < t_{d+1,d+1} < \ldots\] and moreover $t_{d,k} - t_{d,k+1} = d$ for any $k$ and $t_{d,0} + d \leq s_{d,0}$. It follows that the element in $(\mathbb{F}_p\llp x\rrp)\llb y\rrb$ defined by $F(x,y) - F(y,x)$ has an $p,d$-sieve if the elements in $\mathbb{F}_p\llp x\rrp$ defined by $h_0(x),h_1(x),\dots,h_{p-1}(x)$ are linearly independent over $\mathbb{F}_p(x)$. But it is straightforward to check that these elements satisfy assumptions of Lemma~\ref{lemma:linearly_indep_new}, so they are independent by this lemma.
\end{proof}

\begin{lemma}
\label{lemma:powers-of-UoverVmodp}
Assume that $U(x), V(x)\in \ZZ[x]$ are polynomials satisfying conditions of Theorem~\ref{th:power_series_cotorsion}. Then there exists a $p_0$ such that for any prime number $p\geq p_0$ the elements $\alpha,\beta \in \mathbb{F}_p(x)$ defined as $\alpha = U(x)\mod p$ and $\beta = V(x)\mod p$ satisfy conditions of Lemma~\ref{lemma:sieve-vs-rank} with $k = \mathbb{F}_p$ and $K = \mathbb{F}_p(x)$.
\end{lemma}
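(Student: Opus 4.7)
The plan is to verify the two hypotheses that Lemma~\ref{lemma:sieve-vs-rank} imposes on $(\alpha,\beta)$: that both lie in $K^\times = \mathbb{F}_p(x)^\times$, and that $1,\alpha\beta^{-1},(\alpha\beta^{-1})^2,\dots$ are linearly independent over $k=\mathbb{F}_p$. The first condition is routine: $U,V\in\ZZ[x]$ are both nonzero ($U\neq 0$ since $U/V\neq 0$, and $V\neq 0$ since $U/V$ is defined), so for any prime $p$ not dividing the contents of $U$ and $V$ the reductions $\alpha,\beta\in\mathbb{F}_p[x]$ are still nonzero, which excludes only finitely many primes.

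The main point is the second condition. Here I would exploit the fact that $\mathbb{F}_p(x)$ is a purely transcendental extension of $\mathbb{F}_p$ of transcendence degree one: any element of $\mathbb{F}_p(x)$ that is not already in $\mathbb{F}_p$ is transcendental over $\mathbb{F}_p$, and so its powers are automatically $\mathbb{F}_p$-linearly independent. Thus the second condition reduces to showing that $\alpha\beta^{-1} = (U \bmod p)/(V\bmod p)$ is a non-constant element of $\mathbb{F}_p(x)$ for all sufficiently large $p$.

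To arrange this, I would first show that $U$ and $V$ are $\QQ$-linearly independent as elements of $\QQ[x]$. Indeed, the hypothesis $U/V\in\ZZ[x]^l\smallsetminus\ZZ$ says that $U/V$ lies in $\ZZ\llb x\rrb$ but is not a constant integer; combined with the elementary equality $\QQ\cap\ZZ\llb x\rrb = \ZZ$, this rules out $U/V$ being a rational constant as well, so $U/V$ is non-constant in $\QQ(x)$, which is equivalent to $\QQ$-linear independence of $U$ and $V$. Consequently the coefficient matrix of $(U,V)$ has rank two over $\QQ$, so some $2\times 2$ minor $\Delta = u_iv_j - u_jv_i$ is a nonzero integer. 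For any prime $p$ not dividing $\Delta$, the reductions of $U$ and $V$ remain $\mathbb{F}_p$-linearly independent, and hence $\alpha\beta^{-1}$ is non-constant in $\mathbb{F}_p(x)$, as required.

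Taking $p_0$ large enough to exclude the finitely many bad primes completes the argument. There is no substantial obstacle; I note in passing that the symmetry hypothesis $P(x,y)=\pm P(y,x)$ is not needed for this particular lemma, though it plays a role elsewhere in the proof of Theorem~\ref{th:power_series_cotorsion}. The only step requiring a moment of thought is the translation of $U/V\in\ZZ[x]^l\smallsetminus\ZZ$ into $\QQ$-linear independence of $U$ and $V$, which is precisely where the identity $\QQ\cap\ZZ\llb x\rrb = \ZZ$ gets used.
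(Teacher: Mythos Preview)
Your proof is correct and follows essentially the same approach as the paper: both reduce to showing $\alpha\beta^{-1}\notin\mathbb{F}_p$ for all large $p$, and then observe that any non-constant element of $\mathbb{F}_p(x)$ has $\mathbb{F}_p$-linearly independent powers. The paper phrases this last step as a divisibility argument in $\mathbb{F}_p[x]$ (a homogeneous relation $c_0\alpha^n+\dots+c_n\beta^n=0$ with $c_0c_n\neq 0$ forces $\alpha/\beta\in\mathbb{F}_p$) rather than invoking transcendence, and is terser about why $\alpha\beta^{-1}\notin\mathbb{F}_p$ for large $p$, but the content is the same; your explicit use of a nonzero $2\times 2$ minor of the coefficient matrix of $(U,V)$ is a clean way to make that step concrete.
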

\begin{proof}
By assumptions of Theorem~\ref{th:power_series_cotorsion} we have $\frac{U(x)}{V(x)}\in \ZZ[x]^l\smallsetminus \ZZ$ which implies that $\alpha\beta^{-1}\in K\smallsetminus k$ for any $p$ large enough. Fix such a $p$ and assume that $1,\alpha\beta^{-1},(\alpha\beta^{-1})^2,\dots$ are linearly dependent over $k$. This implies that there exist an integer $n>0$ and  $c_0,c_1,\dots, c_n\in k$ such that 
\[
    c_0\alpha^n + c_1\alpha^{n-1}\beta + \ldots + c_n\beta^n = 0
\]
and $c_0c_n\neq 0$. But this immediately implies that polynomials $\alpha,\beta\in \mathbb{F}_p[x]\subset K$ divide each other as elements of the ring $ \mathbb{F}_p[x]$ which is possible only if $\alpha\beta^{-1}\in k$.
\end{proof}

\subsection{Proof of Theorem \ref{th:power_series_cotorsion}} 
In order to prove that a group $A$ is not cotorsion, it is sufficient to construct a non-split short exact sequence $0\to A\to B\to C\to 0$ with torsion free $C.$ Set

\begin{equation}
S=(U(x)-V(x)y)\cdot \ZZ\llb x \rrb \cdot \ZZ\llb y \rrb + \ZZ[x]^l\cdot \ZZ[x]^l + \Sym(\ZZ\llb x\rrb \cdot \ZZ\llb y\rrb).    
\end{equation}
We want to prove that the group
$(\ZZ\llb x \rrb \cdot \ZZ\llb y \rrb)/S$
is not cotorsion. Consider the following short exact sequence 
\begin{equation}
0 \longrightarrow \frac{\ZZ\llb x \rrb \cdot \ZZ\llb y \rrb}{S} \longrightarrow \frac{\ZZ\llb x, y \rrb}{S} \longrightarrow \frac{\ZZ\llb x,y \rrb}{ \ZZ\llb x \rrb \cdot \ZZ\llb y \rrb } \longrightarrow 0.    
\end{equation}
 By Lemma \ref{lemma:torsionfree} the group $\frac{\ZZ\llb x,y \rrb}{ \ZZ\llb x \rrb \cdot \ZZ\llb y \rrb }$ is torsion free. Therefore the theorem follows from Proposition~\ref{prop:the-morphism-dont-split}.

\begin{proposition}
  \label{prop:the-morphism-dont-split} Under assumptions of Theorem \ref{th:power_series_cotorsion}  the following epimorphism of abelian groups does not split:
  \[
    \frac{\ZZ\llb x,y\rrb}{(V(x) - U(x)y)\cdot\ZZ\llb x\rrb\cdot \ZZ\llb y\rrb + \ZZ[x]^l\cdot \ZZ[ y]^l + \Sym(\ZZ\llb x\rrb \cdot \ZZ\llb y\rrb)} \twoheadrightarrow \frac{\ZZ\llb x,y\rrb}{\ZZ\llb x \rrb\cdot  \ZZ\llb y\rrb}.
  \]
\end{proposition}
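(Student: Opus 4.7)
My plan is to argue by contradiction, using the highly divisible power series $F$ from Lemma~\ref{lemma:existence-of-sieve} together with the sieve obstruction of Lemma~\ref{lemma:sieve-vs-rank}. Suppose the displayed epimorphism admits a section $\iota$. Since $\bar F$ is divisible by every prime in the right-hand quotient, so is $\iota(\bar F)$ in the left-hand quotient. Lift $\iota(\bar F)$ to a representative $\tilde F\in \ZZ\llb x,y\rrb$; because $\tilde F$ also represents the class $\bar F$ modulo $\ZZ\llb x\rrb\cdot \ZZ\llb y\rrb$, the difference $K := \tilde F - F$ lies in $\ZZ\llb x\rrb\cdot \ZZ\llb y\rrb$ and is fixed once the lift is chosen.

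For each prime $p$ the $p$-divisibility of $\tilde F\bmod S$ yields a decomposition
\[
  \tilde F - p g_p \;=\; (U(x) - V(x)y)\,\phi_p \;+\; \rho_p \;+\; \sigma_p,
\]
with $g_p\in \ZZ\llb x,y\rrb$, $\phi_p\in \ZZ\llb x\rrb\cdot \ZZ\llb y\rrb$, $\rho_p\in \ZZ[x]^l\cdot \ZZ[y]^l$, and $\sigma_p\in \Sym(\ZZ\llb x\rrb\cdot \ZZ\llb y\rrb)$. I would then reduce this modulo $p$, which annihilates $pg_p$, and subtract the transpose under the swap $x\leftrightarrow y$, which kills the symmetric term $\sigma_p$. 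Substituting $\tilde F = F+K$ produces the key congruence
\[
  F(x,y) - F(y,x) \;\equiv\; (U(x)-V(x)y)\,\phi_p(x,y) \;+\; G_p \pmod{p}
\]
in $\mathbb{F}_p\llb x,y\rrb$, with $G_p := -(U(y)-V(y)x)\,\phi_p(y,x) + \rho_p(x,y) - \rho_p(y,x) - K(x,y) + K(y,x)$.

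Next, I would interpret this as an identity in $(\mathbb{F}_p\llp x\rrp)\llb y\rrb$ and invoke Lemma~\ref{lemma:sieve-vs-rank} with $k = \mathbb{F}_p$, $K = \mathbb{F}_p(x)$, $\beta = U(x)\bmod p$ and $\alpha = V(x)\bmod p$; Lemma~\ref{lemma:powers-of-UoverVmodp} supplies the required algebraic independence of $1,\alpha\beta^{-1},(\alpha\beta^{-1})^2,\dots$ once $p\geq p_0$. Corollary~\ref{cor:finite_rank} shows $\phi_p(x,y)\bmod p$ has finite $\mathbb{F}_p$-rank as a series in $y$, while a summand-by-summand bookkeeping — using the explicit product representations of $\phi_p$ and $\rho_p$ and the uniform finiteness of the rank of $K$ — yields a finite $\mathbb{F}_p(x)$-rank for $G_p$. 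Lemma~\ref{lemma:sieve-vs-rank} thus forbids any $(n,d)$-sieve of $F(x,y)-F(y,x)\bmod p$ whose parameters exceed these ranks. However, Lemma~\ref{lemma:existence-of-sieve} supplies a $(p,d)$-sieve for every $d\geq p$; taking $d$ larger than $\rk_{\mathbb{F}_p}\phi_p$ forces $\rk_{\mathbb{F}_p(x)} G_p \geq p$.

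The delicate step, and the principal obstacle, is converting this rank bound into a genuine contradiction as $p$ ranges over large primes. The $K$-summands contribute to $\rk_{\mathbb{F}_p(x)} G_p$ a quantity bounded uniformly in $p$ (since $K$ is a fixed finite-rank element), so the burden falls on the two remaining summands $(U(y)-V(y)x)\phi_p(y,x)$ and $\rho_p(x,y)-\rho_p(y,x)$. My expectation is that the explicit formula for $F$ from Lemma~\ref{lemma:existence-of-sieve} imposes enough rigidity on the way $\tilde F = F+K$ modulo $p$ can be written as a sum of a multiple of $U-Vy$, a rational-function product, and a symmetric series, that any admissible decomposition must have $\mathbb{F}_p(x)$-rank of $G_p$ strictly below $p$ for some sufficiently large prime. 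Combining this with the lower bound $\rk_{\mathbb{F}_p(x)}G_p\geq p$ produces the contradiction that closes the proof.
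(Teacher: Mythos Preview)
Your overall architecture is correct and matches the paper's: lift $[F]$, antisymmetrize modulo $p$, and invoke Lemma~\ref{lemma:sieve-vs-rank} against the sieves supplied by Lemma~\ref{lemma:existence-of-sieve}. The gap is exactly where you flag ``the principal obstacle'': the term $-(U(y)-V(y)x)\,\phi_p(y,x)$ that you put into $G_p$. As a series in $y$ with coefficients in $\mathbb{F}_p\llp x\rrp$, this term has $\mathbb{F}_p(x)$-rank bounded only by the number of tensor factors in $\phi_p$, and $\phi_p$ depends on $p$. So your $\rk_{\mathbb{F}_p(x)} G_p$ has no bound uniform in $p$, and the hoped-for ``rigidity'' is never supplied; the argument does not close.

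The missing idea is to use the hypothesis $P(x,y)=\pm P(y,x)$ of Theorem~\ref{th:power_series_cotorsion}, which you never invoke. It gives $U(y)-V(y)x=\pm\bigl(U(x)-V(x)y\bigr)$, so after antisymmetrizing one has
\[
F(x,y)-F(y,x)\equiv (U(x)-V(x)y)\bigl(\phi_p(x,y)\mp\phi_p(y,x)\bigr)+\widetilde\rho_p+\widetilde K \pmod p,
\]
with $\widetilde\rho_p=\rho_p(x,y)-\rho_p(y,x)$ and $\widetilde K=K(y,x)-K(x,y)$. Now the transposed $\phi_p$-term sits in the $H$-slot of Lemma~\ref{lemma:sieve-vs-rank}, where only a finite $\mathbb{F}_p$-rank is needed and one simply chooses $d$ larger than it. The $G$-slot is $\widetilde\rho_p+\widetilde K$: every $y$-coefficient of $\widetilde\rho_p$ lies in $\mathbb{F}_p(x)$, so it contributes at most $1$ to the $\mathbb{F}_p(x)$-rank, while $\widetilde K$ contributes at most the fixed integer $n_0:=\rk_\ZZ\bigl(K(x,y)-K(y,x)\bigr)$. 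Taking any prime $p\geq\max\{p_0,\,n_0+2\}$ then forces $\rk_{\mathbb{F}_p(x)}G\leq n_0+1<p$, and Lemma~\ref{lemma:sieve-vs-rank} contradicts the $(p,d)$-sieve. This is precisely how the paper proceeds.
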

\begin{proof}
Let $F(x,y)\in \ZZ\llb x,y\rrb$ be the series defined in Lemma~\ref{lemma:existence-of-sieve} and let $[F]\in \frac{\ZZ\llb x,y\rrb}{\ZZ\llb x \rrb\cdot \ZZ\llb y\rrb}$ denote the corresponding element. The proposition will be proved if we show that for any lift of $[F]$ under the morphism from the proposition there exists a prime $p$ that does \emph{not} divide this lift.

Any such lift can be represented by a series of the form $F(x,y) - G_1(x,y)$, where $G_1(x,y)$ has a finite rank. Fix such a lift. Note that $G_1(x,y) - G_1(y,x)$ has a finite rank too. Write $G_1(x,y) - G_1(y,x) = \sum\limits_{j\geq 0}g_j(x)y^j\in \ZZ\llb x,y\rrb$. By Corollary \ref{cor:finite_rank} the sequence $g_0(x), g_1(x),\dots$ spans a finitely generated abelian group of some rank  $n_0.$

Let $p_0$ be the constant from Lemma~\ref{lemma:powers-of-UoverVmodp} and $p\geq \max\{n_0+2,p_0\}$ be a prime number. The fact that $F(x,y)-G_1(x,y)$ is divisible by $p$ as an element of $\frac{\ZZ\llb x,y\rrb}{(V(x) - U(x)y)\cdot\ZZ\llb x\rrb\cdot \ZZ\llb y\rrb + \ZZ[x]^l\cdot \ZZ[ y]^l +  \Sym(\ZZ\llb x\rrb \cdot \ZZ\llb y\rrb)}$ means that one can find some series $H_0(x,y)\in \ZZ\llb x\rrb\cdot \ZZ\llb y\rrb$, $G_2(x,y)\in \ZZ[x]^l\cdot \ZZ[x]^l$ and $S(x,y)\in \Sym(\ZZ\llb x\rrb \cdot \ZZ\llb y\rrb)$ such that \[F(x,y) - (V(x)-U(x)y)\cdot H_0(x,y)- G_1(x,y) - G_2(x,y) - S(x,y)\] is divisible by $p$ as an element of $\ZZ\llb x,y\rrb$. This can be reformulated as 
\begin{multline}
  \label{eq:div-by-p1}
  \text{\textit{one has in }}\mathbb{F}_p\llb x,y\rrb:\\  F(x,y)= \\= (V(x)-U(x)y)\cdot H_0(x,y)+ G_1(x,y) + G_2(x,y) + S(x,y).
\end{multline}
Note that there is a natural embedding $\mathbb{F}_p\llb x,y\rrb\hookrightarrow (\mathbb{F}_p\llp x\rrp)\llb y\rrb$, in particular, we can assume that~\eqref{eq:div-by-p1} holds in $(\mathbb{F}_p\llp x\rrp)\llb y\rrb$. Using that $S(x,y) = S(y,x)$ by the definition and the polynomial $P(x,y) = (V(x)-U(x)y)$ is either symmetric or anti-symmetric we find out that 
\begin{multline}
  \label{eq:div-by-p2}
  \text{\textit{one has in }}(\mathbb{F}_p\llp x\rrp)\llb y\rrb:\\
  F(x,y) - F(y,x)= \\= (V(x)-U(x)y)\cdot \widetilde{H}_0(x,y)+ \widetilde{G}_1(x,y) + \widetilde{G}_2(x,y)
\end{multline}
where $\widetilde{H}_0(x,y) = H_0(x,y) \pm H_0(y,x)$ and $\widetilde{G}_{1,2}(x,y) = G_{1,2}(x,y) - G_{1,2}(y,x)$.
Define now
\begin{itemize}
  \item $A = \mathbb{F}_p\llp x\rrp$, $K=\mathbb{F}_p(x),$ $k=\mathbb{F}_p.$
  \item $\alpha\in K^\times$ corresponds to $U(x)$ and $\beta\in K^\times$ corresponds to $V(x)$ (note that the fact that $\alpha,\beta\neq 0$ is guaranteed by the fact that $p\geq p_0$ and Lemma~\ref{lemma:powers-of-UoverVmodp});
  \item $G(y)\in A\llb y\rrb$ corresponding to $\widetilde{G}_1(x,y) + \widetilde{G}_2(x,y)$ and $H(y) \in A\llb y\rrb$ corresponding to $\widetilde{H}_0(x,y)$;
  \item $d\geq p$ be any integer greater than the $\mathbb{F}_p$-rank of $H(y)$ and $n_1$ to be the $K$-rank of $G(y)$; note that $d<\infty$ and $n_1\leq n_0+1$ by the construction, since the coefficients of $\widetilde{G}_2(x,y)$ considered as a series of $y$, are rational functions of $x$.
\end{itemize}
Now it is straightforward that $H,G$ satisfy assumptions of Lemma~\ref{lemma:sieve-vs-rank} with the chosen $d$ and and $n=p$ since $p\geq n_0+2\geq n_1+1$. Moreover, $\alpha$ and $\beta$ satisfy assumptions of Lemma~\ref{lemma:sieve-vs-rank} due to Lemma~\ref{lemma:powers-of-UoverVmodp} and the choice of $p$. Therefore, with all this setup we can apply Lemma~\ref{lemma:sieve-vs-rank} and conclude from~\eqref{eq:div-by-p2} that the element in $(\mathbb{F}_p\llp x\rrp)\llb y\rrb$ defined by $F(x,y) - F(y,x)$ cannot have an $p,d$-sieve. But it has $p,d$-sieve by the construction (recall that $F(x,y)$ is the series from Lemma~\ref{lemma:existence-of-sieve}), so we obtain a contradiction.
\end{proof}

\subsection{Some related results} 

In this subsection we prove some results related to the topic of this section but which are unnecessary for the main topic of the article (Theorem A and Theorem B). 

We set 
\begin{equation}
P=\prod_{i=1}^\infty \ZZ, \hspace{1cm} S=\bigoplus_{i=1}^\infty \ZZ.
\end{equation}
The group $P$ is known as Baer–Specker group. By the Hulanicki theorem the group $P/S$ is cotorsion. Moreover, any at most continuous cotorsion group is a quotient of $P/S$ \cite[Th.7]{goldsmith2004cotorsion}.

\begin{proposition}
The groups 
\begin{equation}
\frac{P\otimes P}{S\otimes S}, \hspace{1cm}
\frac{\Lambda^2 P}{\Lambda^2 S}    
\end{equation}
are not cotorsion.
\end{proposition}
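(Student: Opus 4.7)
The plan is to deduce both non-cotorsion assertions from Theorem~\ref{th:power_series_cotorsion} by exhibiting each of the two groups in the statement as mapping surjectively onto a concrete instance of the non-cotorsion quotient produced by that theorem; since any quotient of a cotorsion group is cotorsion, this will suffice.

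First I would identify $P$ with $\ZZ\llb x\rrb$ (and with $\ZZ\llb y\rrb$) as an abelian group via $(a_0,a_1,\dots)\leftrightarrow \sum_{i\ge 0}a_ix^i$. Multiplication of power series is a $\ZZ$-bilinear pairing $\ZZ\llb x\rrb\times \ZZ\llb y\rrb \to \ZZ\llb x,y\rrb$ and so induces a homomorphism
\[
\phi\colon P\otimes P \longrightarrow \ZZ\llb x,y\rrb,\qquad a\otimes b\longmapsto a(x)\,b(y),
\]
whose image is, by the very definition of the finite-rank subring, precisely $\ZZ\llb x\rrb\cdot \ZZ\llb y\rrb$. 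Under the same identifications $S$ corresponds to $\ZZ[x]$, so $\phi(S\otimes S)=\ZZ[x,y]\subseteq \ZZ[x]^l\cdot \ZZ[y]^l$. I would then specialise Theorem~\ref{th:power_series_cotorsion} to the choice $U(x)=x$, $V(x)=1$ (so that $P(x,y)=x-y$ is antisymmetric and $U/V=x\in \ZZ[x]^l\smallsetminus\ZZ$), which produces the non-cotorsion quotient
\[
Q=\frac{\ZZ\llb x\rrb\cdot \ZZ\llb y\rrb}{(x-y)\cdot\ZZ\llb x\rrb\cdot \ZZ\llb y\rrb+\ZZ[x]^l\cdot \ZZ[y]^l+\Sym(\ZZ\llb x\rrb\cdot \ZZ\llb y\rrb)}.
\]
Composing $\phi$ with the quotient $\ZZ\llb x\rrb\cdot \ZZ\llb y\rrb\twoheadrightarrow Q$ yields a surjection that vanishes on $S\otimes S$, hence factors through a surjection $(P\otimes P)/(S\otimes S)\twoheadrightarrow Q$ and rules out cotorsion for the source.

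For the exterior square I would use the same map: since $\phi(a\otimes a)=a(x)\,a(y)$ is a rank-one symmetric power series it lies in $\Sym(\ZZ\llb x\rrb\cdot \ZZ\llb y\rrb)$, which is part of the denominator of $Q$. Hence the surjection above descends further to $\Lambda^2 P\twoheadrightarrow Q$, and the image of $\Lambda^2 S$ is killed for the very same reason as $S\otimes S$, yielding a surjection $(\Lambda^2 P)/(\Lambda^2 S)\twoheadrightarrow Q$ and the second non-cotorsion conclusion. All substantive work in this strategy is encoded in Theorem~\ref{th:power_series_cotorsion}, which has already been proved; the remaining steps are the routine verifications that $\phi$ has the claimed image and that the subgroups $S\otimes S$ and the symmetric tensors land inside the three summands of the denominator of $Q$.
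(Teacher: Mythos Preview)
Your proposal is correct and follows essentially the same route as the paper: identify $P\cong\ZZ\llb x\rrb$, use the multiplication map into $\ZZ\llb x,y\rrb$ with image $\ZZ\llb x\rrb\cdot\ZZ\llb y\rrb$, observe that diagonal tensors land in $\Sym(\ZZ\llb x\rrb\cdot\ZZ\llb y\rrb)$ and that $S\otimes S$ lands in $\ZZ[x]^l\cdot\ZZ[y]^l$, and then invoke Theorem~\ref{th:power_series_cotorsion}. The only cosmetic differences are that the paper first reduces to the exterior-square case (since $\Lambda^2P/\Lambda^2S$ is a quotient of $P\otimes P/S\otimes S$) and leaves the choice of $U,V$ implicit, whereas you treat the two cases in parallel and fix $U(x)=x$, $V(x)=1$ explicitly.
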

\begin{proof}
The group $\frac{\Lambda^2 P}{\Lambda^2 S} $ is a quotient of $\frac{P\otimes P}{S\otimes S}.$ Therefore, it is enough to prove the statement for $\frac{\Lambda^2 P}{\Lambda^2 S}.$ Note that there are isomorphisms of abelian groups $P\cong \ZZ\llb x \rrb$ and $S\cong \ZZ[x]$ that respect inclusions. So it is enough to prove that $\frac{\Lambda^2 \ZZ\llb x \rrb}{\Lambda^2 \ZZ[x]}$ is not cotorsion. Consider the homomorphism 
\begin{equation}
    \theta : \ZZ \llb x\rrb\otimes \ZZ \llb x \rrb \longrightarrow \ZZ\llb x,y \rrb, \hspace{1cm} f\otimes g \mapsto f(x)g(y). 
\end{equation}
The image of $\theta$ is $\ZZ\llb x \rrb\cdot \ZZ\llb y \rrb $ and the image of the subgroup generated by the elements $f\otimes f$ lies in ${\sf Sym}(\ZZ\llb x \rrb \cdot \ZZ\llb y \rrb).$ Therefore we have an epimorphism 
\begin{equation}
\Lambda^2 \ZZ\llb x \rrb \epi (\ZZ\llb x \rrb \cdot \ZZ\llb y \rrb)/ {\sf Sym}( \ZZ\llb x \rrb \cdot \ZZ\llb y \rrb ).  
\end{equation}
This epimorphism induces an epimorphism 
\begin{equation}
\frac{\Lambda^2 \ZZ\llb x \rrb}{\Lambda^2 \ZZ[x] } \epi \frac{\ZZ\llb x \rrb \cdot \ZZ\llb y \rrb}{\ZZ[x]^l\cdot \ZZ[y]^l+ {\sf Sym}( \ZZ\llb x \rrb \cdot \ZZ\llb y \rrb )}.
\end{equation}
By Theorem \ref{th:power_series_cotorsion} a quotient of the last group is not cotorsion. The assertion follows. 
\end{proof}

For a prime number $p$ we set
\begin{equation}
    P_p = \prod_{i=1}^\infty \ZZ_p,
\end{equation}
where $\ZZ_p=\varprojlim \ZZ/p^i$ is the group of $p$-adic integers. The group $P_p$ is cotorsion.
\begin{proposition}\label{prop:P_p}
The group $P_p\otimes P_p$ is not cotorsion.
\end{proposition}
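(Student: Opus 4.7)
The plan is to imitate the proof of the previous proposition and reduce the statement to a $\ZZ_p$-coefficient version of Theorem \ref{th:power_series_cotorsion}. First, identify $P_p$ with $\ZZ_p\llb x\rrb$ as abelian groups via $(a_0,a_1,\ldots)\mapsto \sum a_ix^i$; this identifies $P_p\otimes P_p$ with $\ZZ_p\llb x\rrb\otimes \ZZ_p\llb y\rrb$, and the multiplication map $f\otimes g\mapsto f(x)g(y)$ yields an epimorphism
\[
  P_p\otimes P_p \twoheadrightarrow \ZZ_p\llb x\rrb\cdot \ZZ_p\llb y\rrb\subset \ZZ_p\llb x,y\rrb.
\]
Since quotients of cotorsion groups are cotorsion, it suffices to exhibit a non-cotorsion quotient of $\ZZ_p\llb x\rrb\cdot \ZZ_p\llb y\rrb$.

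Fix polynomials $U(x),V(x)\in \ZZ[x]$ satisfying the hypotheses of Theorem \ref{th:power_series_cotorsion}, view them inside $\ZZ_p[x]$, and set
\[
D_p = (U(x)-V(x)y)\cdot \ZZ_p\llb x\rrb\cdot \ZZ_p\llb y\rrb + \ZZ_p[x]^l\cdot \ZZ_p[y]^l + \Sym(\ZZ_p\llb x\rrb\cdot \ZZ_p\llb y\rrb).
\]
The claim is that $(\ZZ_p\llb x\rrb\cdot \ZZ_p\llb y\rrb)/D_p$ is not cotorsion; following the strategy of Theorem \ref{th:power_series_cotorsion} and Proposition \ref{prop:the-morphism-dont-split}, I would extract this from the short exact sequence
\[
0\to \frac{\ZZ_p\llb x\rrb\cdot \ZZ_p\llb y\rrb}{D_p} \to \frac{\ZZ_p\llb x,y\rrb}{D_p}\to \frac{\ZZ_p\llb x,y\rrb}{\ZZ_p\llb x\rrb\cdot \ZZ_p\llb y\rrb}\to 0
\]
by showing that the right-hand term is torsion-free while the sequence does not split.

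Torsion-freeness would follow from the direct analogue of Lemma \ref{lemma:torsionfree}: if $nF\in \ZZ_p\llb x\rrb\cdot \ZZ_p\llb y\rrb$ with $0\neq n\in \ZZ$, then the $y$-coefficients of $nF$ lie in some finitely generated $\ZZ_p$-submodule $M$ of $\ZZ_p\llb x\rrb$ by the $\ZZ_p$-version of Lemma \ref{lemma:finite_rank}, and hence those of $F$ lie in $n^{-1}M\cap \ZZ_p\llb x\rrb$, which is again finitely generated over $\ZZ_p$ by Noetherianity. For non-splitting, I would reuse the series $F$ of Lemma \ref{lemma:existence-of-sieve}, now viewed in $\ZZ_p\llb x,y\rrb$: since $F\bmod p$ has finite $\mathbb{F}_p$-rank, its class $[F]$ in the right-hand term is divisible by $p$, whereas the argument of Proposition \ref{prop:the-morphism-dont-split} should show that no lift of $[F]$ to the middle term can be divisible by $p$. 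Indeed, such a lift would produce, after reduction modulo $p$ and antisymmetrisation, an identity in $(\mathbb{F}_p\llp x\rrp)\llb y\rrb$ of the form $F(x,y)-F(y,x)=(U(x)-V(x)y)\widetilde H+\widetilde G$ with $\widetilde H$ of bounded $\mathbb{F}_p$-rank and $\widetilde G$ of bounded $\mathbb{F}_p(x)$-rank, which together with Lemma \ref{lemma:powers-of-UoverVmodp} contradicts the existence of the $p,d$-sieve for $F(x,y)-F(y,x)$ guaranteed by Lemma \ref{lemma:existence-of-sieve} through Lemma \ref{lemma:sieve-vs-rank}.

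The main obstacle will be the bookkeeping of ranks under reduction modulo $p$: one must verify that each summand of $D_p$ produces a contribution of controlled $\mathbb{F}_p$- or $\mathbb{F}_p(x)$-rank after antisymmetrisation. This reduces to the elementary observation that the $y$-coefficients of any element of $\ZZ_p\llb x\rrb\cdot \ZZ_p\llb y\rrb$ (respectively $\ZZ_p[x]^l\cdot \ZZ_p[y]^l$) span a finitely generated $\ZZ_p$-submodule of $\ZZ_p\llb x\rrb$ (respectively $\ZZ_p[x]^l$), whose reduction modulo $p$ has $\mathbb{F}_p$-dimension bounded by the number of $\ZZ_p$-generators; once this is in place, the proof of Proposition \ref{prop:the-morphism-dont-split} applies almost verbatim.
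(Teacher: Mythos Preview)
Your approach has a genuine gap. The central claim --- that no lift of $[F]$ to the middle term $\ZZ_p\llb x,y\rrb/D_p$ is divisible by $p$ --- is false. Indeed, the very fact that $F\bmod p$ has finite $\mathbb{F}_p$-rank (which is what makes $[F]$ divisible by $p$ in the quotient) lets you lift $F\bmod p$ to an element $G_1\in\ZZ_p\llb x\rrb\cdot\ZZ_p\llb y\rrb$ with $G_1\equiv F\pmod p$ in $\ZZ_p\llb x,y\rrb$; then the lift $[F-G_1]$ is divisible by $p$ already in $\ZZ_p\llb x,y\rrb$, a fortiori in the middle term. The sieve machinery of Proposition~\ref{prop:the-morphism-dont-split} cannot rescue this: there the prime is chosen \emph{after} the lift, large enough that $p$ exceeds the rank $n_0$ of the correction $G_1$, and Lemma~\ref{lemma:existence-of-sieve} supplies only a $p,d$-sieve. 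Here $p$ is fixed from the outset while the rank of $G_1$ is unconstrained, so the inequality $p\geq n_0+2$ needed to invoke Lemma~\ref{lemma:sieve-vs-rank} cannot be arranged.

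The paper's proof sidesteps all of this with a far more elementary argument. It works directly with the short exact sequence
\[
0\longrightarrow \ZZ_p\llb x\rrb\cdot\ZZ_p\llb y\rrb \longrightarrow \ZZ_p\llb x,y\rrb \longrightarrow \frac{\ZZ_p\llb x,y\rrb}{\ZZ_p\llb x\rrb\cdot\ZZ_p\llb y\rrb} \longrightarrow 0
\]
(no quotient by any $D_p$), checks that the right-hand group is torsion-free by the $\ZZ_p$-analogue of Lemma~\ref{lemma:torsionfree}, and exhibits non-splitting via the single explicit series $f=\sum_{i\ge 0}p^ix^iy^i$: its class in the quotient is non-zero (the $\ZZ_p$-module spanned by $1,px,p^2x^2,\dots$ is not finitely generated) and divisible by every power of $p$ (truncate and factor out $p^k$), whereas $\ZZ_p\llb x,y\rrb$ contains no non-zero element divisible by all powers of $p$. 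No sieves, no appeal to Theorem~\ref{th:power_series_cotorsion}.
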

\begin{proof}
Note that there is an isomorphism $P_p=\ZZ_p\llb x \rrb.$  The group $\ZZ_p\llb x \rrb \otimes \ZZ_p\llb x \rrb $ maps onto the group $\ZZ_p\llb x \rrb \cdot \ZZ_p\llb y \rrb \subseteq \ZZ\llb x,y \rrb.$ So it is enough to prove that $\ZZ_p\llb x \rrb \cdot \ZZ_p\llb y \rrb$ is not cotorsion. Consider the short exact sequence
\begin{equation}\label{eq:P_p} 0\longrightarrow
\ZZ_p\llb x \rrb \cdot \ZZ_p\llb y \rrb \longrightarrow \ZZ_p\llb x,y \rrb \longrightarrow \frac{\ZZ_p\llb x,y\rrb }{\ZZ_p\llb x \rrb \cdot \ZZ_p\llb y \rrb } \longrightarrow 0.   
\end{equation}
So it is sufficient to prove that: (1) $\frac{\ZZ_p\llb x,y\rrb }{\ZZ_p\llb x \rrb \cdot \ZZ_p\llb y \rrb }$ is torsion free and (2) the short exact sequence \eqref{eq:P_p} does not split.

(1) By Lemma \ref{lemma:finite_rank} a power series $f(x,y)=\sum a_i(x)y^i \in \ZZ_p\llb x,y \rrb $ lies in $\ZZ_p\llb x \rrb \cdot \ZZ_p\llb y\rrb $ if and only if the $\ZZ_p$-submodule generated by $a_0(x),a_1(x),\dots $ is finitely generated. On the other hand the submodule generated by  $na_0(x), na_1(x),\dots$ is isomorphic to the submodule generated by $a_0(x),a_1(x),\dots$. Therefore $nf\in \ZZ_p\llb x \rrb \cdot \ZZ_p\llb y\rrb$ if and only if $f\in \ZZ_p\llb x \rrb \cdot \ZZ_p\llb y\rrb.$ 

(2) Consider the power series $f(x,y)=\sum_{i=0}^\infty p^ix^iy^i.$ Its image in $\frac{\ZZ_p\llb x,y\rrb }{\ZZ_p\llb x \rrb \cdot \ZZ_p\llb y \rrb }$ is nontrivial (because the $\ZZ_p$-submodule generated by $1,px,p^2x^2,\dots$ is not finitely generated) and it is divisible by any power of $p.$ On the other hand there is no a non-trivial element divisible by all powers of $p$ in $\ZZ_p\llb x,y \rrb.$ The assertion follows.
\end{proof}

\begin{remark}
Proposition \ref{prop:P_p} shows that the tensor product of cotorsion groups is not necessarily cotorsion.
\end{remark}

\begin{proposition}
There exists a family of continuous cardinality  $(g_\alpha)_{\alpha\in 2^{\aleph_0}}$ of power series  $g_\alpha\in \ZZ\llb x \rrb$  such that its image in  $\mathbb{F}_p\llp x \rrp $ is linearly independent over $\mathbb{F}_p(x)$ for any prime $p.$
\end{proposition}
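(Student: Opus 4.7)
My plan is to construct an explicit family $\{g_\alpha\}_{\alpha\in 2^{\mathbb N}}$ of power series with $\{0,1\}$-coefficients whose supports are ``lacunary spikes'' sitting along the branches of the binary tree, and then to invoke the finite-subfamily version of the linear-independence mechanism underlying Lemma~\ref{lemma:linearly_indep_new}. Pick rapidly growing integers $M_1<M_2<\dots$ with $M_{k+1}>M_k+k\cdot 2^k+k$. For each binary string $s\in 2^k$ write $\mathrm{int}(s)\in\{0,\dots,2^k-1\}$ for its binary value and set $m(s)=M_k+k\cdot \mathrm{int}(s)$; thus the exponents attached to length-$k$ strings occupy a ``block'' inside $[M_k,M_k+k\cdot 2^k]$, pairwise separated by distance $\geq k$, with gap $>k$ to adjacent blocks. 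Then define
\[
g_\alpha(x)=\sum_{k\geq 1}x^{m(\alpha|_k)}\in\ZZ\llb x\rrb.
\]
Since all coefficients lie in $\{0,1\}$, the reduction modulo any prime $p$ has the same support.

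To prove linear independence in $\mathbb F_p\llp x\rrp$ over $\mathbb F_p(x)$, I would fix $p$ and suppose for contradiction a non-trivial relation $r_1 g_{\alpha_1}+\dots+r_n g_{\alpha_n}=0$ with distinct $\alpha_i$ and $r_i\in\mathbb F_p[x]$ not all zero (clearing denominators if needed). After reordering so that $\deg r_n$ is maximal, set $N=1+\deg r_n$ and choose $k$ so large that (i) the prefixes $\alpha_1|_k,\dots,\alpha_n|_k$ are pairwise distinct, (ii) $k>N$, and (iii) the gap between block $k$ and block $k\pm 1$ exceeds $N$. Setting $m=m(\alpha_n|_k)$, the construction guarantees that $g_{\alpha_n}$ has coefficient $1$ at $x^m$, each $g_{\alpha_i}$ with $i\neq n$ has coefficient $0$ at $x^m$, and the coefficient of $x^{m+r}$ vanishes in every $g_{\alpha_l}$ for $0<|r|<N$. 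Running the argument from the proof of Lemma~\ref{lemma:linearly_indep_new} verbatim then shows that the coefficient of $x^{m+\deg r_n}$ in $\sum_i r_ig_{\alpha_i}$ equals the leading coefficient of $r_n$, a contradiction.

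The subtle point, and the main obstacle I want to flag, is that Lemma~\ref{lemma:linearly_indep_new} as stated demands a ``signature'' position for $\alpha$ separating it from \emph{every} other element of $\mathcal A$---a requirement that is simply unattainable for a family of cardinality $2^{\aleph_0}$ inside $\mathbb F_p\llp x\rrp$, since only countably many exponents are available. The way around this is to notice that the proof of the lemma only invokes the signature hypothesis for the $\beta$'s appearing in the relation at hand, so engineering it on every finite subfamily suffices, and this is precisely what the binary-tree construction delivers.
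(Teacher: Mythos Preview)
Your argument is correct. The binary-tree construction with block-separated exponents works, and you are right that Lemma~\ref{lemma:linearly_indep_new} cannot be invoked on the full uncountable family but that its proof only uses the signature hypothesis for the finitely many series appearing in a putative relation; your construction supplies exactly that.

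The paper takes a different route. Instead of a tree it uses a \emph{chain}: one picks a family $(X_r)_{r\in\mathbb R}$ of subsets of $\mathbb N$ with $X_s\subset X_r$ and $X_r\setminus X_s$ infinite whenever $s<r$ (for instance $X_r=\{n:a_n<r\}$ for a fixed enumeration $\{a_1,a_2,\dots\}=\mathbb Q$), and sets $g_r=\sum_{n\in X_r}x^{2^n}$. Given a finite subfamily $g_{r_1},\dots,g_{r_n}$ with $r_1<\dots<r_n$, the paper passes to the consecutive differences $h_1=g_{r_1}$, $h_i=g_{r_i}-g_{r_{i-1}}$, whose supports are pairwise disjoint subsets of $\{2^n:n\in\mathbb N\}$; this finite family $h_1,\dots,h_n$ then satisfies the hypotheses of Lemma~\ref{lemma:linearly_indep_new} on the nose, and linear independence of the $h_i$ gives that of the $g_{r_i}$. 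So the paper sidesteps the uncountability obstruction you flagged not by relaxing the lemma to finite subfamilies, but by replacing the original finite subfamily with a triangularly equivalent one that genuinely meets the lemma's hypotheses. Your approach trades this differencing trick for a more elaborate placement of exponents; the paper's trades a simpler exponent set (powers of $2$) for the extra step of passing to differences. Both are clean; the paper's has the mild advantage of using Lemma~\ref{lemma:linearly_indep_new} as a black box rather than reopening its proof.
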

\begin{proof}
Consider a family $(X_r)_{r\in \mathbb{R}}$ of subsets of natural numbers $X_r\subseteq \mathbb{N}$ indexed by real numbers $r\in \mathbb{R}$ such that for any $s<r$ we have $X_s\subseteq X_r$ and $X_r\setminus X_s$ is infinite. For example, we can renumber all rational numbers $\{a_1,a_2,\dots \}=\mathbb{Q}$ and define $X_r =\{ n \mid a_n< r \}.$ 
Then we define $g_r = \sum_{n\in X_r} x^{2^n}$ and consider the family $(g_r)_{r\in \mathbb{R}}.$ 
Fix some prime $p$ and denote by $\bar g_r$ the image of $g_r$ in $\mathbb{F}_p\llp x \rrp$ and prove that $(\bar g_r)_{r\in \mathbb{R}}$ is linearly independent over $\mathbb{F}_p(x).$ 

For, pick up an arbitrary finite collection $\bar g_{r_1},\bar g_{r_2},\dots,\bar g_{r_n}$ enumerated such that $r_1<r_2<\ldots<r_n$. Define $h_1 = \bar g_{r_1}$ and $h_2 = \bar g_{r_2} - \bar g_{r_1},\dots, h_n = \bar g_{r_n} - \bar g_{r_{n-1}}$. Then it is straightforward to check that $h_1,h_2,\dots, h_n$ satisfy the assumption of Lemma~\ref{lemma:linearly_indep_new}, thus they are linearly independent and so are $\bar g_{r_1},\dots, \bar g_{r_n}$.
\end{proof}

\section{Integral lamplighter group}

\subsection{Definition of the lamplighter group}
The classical lamplighter group can be defined as the restricted wreath product $\ZZ/2 \wr \ZZ.$ Here we consider its integral version 
\begin{equation}
L=\ZZ \wr \ZZ.    
\end{equation}
Denote by $C=\langle t \rangle$ the infinite cyclic group written multiplicatively generated by an element $t.$ Note that $\bigoplus_{w\in C} \ZZ = \ZZ[C]=\ZZ[t,t^{-1}].$ Then
\begin{equation}
L = C \ltimes \ZZ[t,t^{-1}], 
\end{equation}
where $C$ acts on $\ZZ[t,t^{-1}]$ by the multiplication. 
This group has the following presentation
\begin{equation}
L=\langle t,a \mid [a, a^{t^m}]=1 , \  m\in \ZZ  \rangle
\end{equation}
The main goal of this section is to prove that the cokernel of the map
\begin{equation}
H_2(L)\longrightarrow H_2(\widehat{L})    
\end{equation}
is not cotorsion (Theorem \ref{th:double_lamplighter}).

\subsection{The completion}

Denote by $\ZZ\llb x\rrb$ the ring of power series and consider the ring homomorphism 
\begin{equation}
\varphi : \ZZ[t,t^{-1}] \longrightarrow \ZZ\llb x \rrb, \hspace{1cm} \varphi(t)=1+x. 
\end{equation}
We will consider the ring $\ZZ\llb x \rrb$ as a module over  
$\ZZ[t,t^{-1}]$
with the action induced by this homomorphism. 

\begin{lemma}\label{lemma:Lcompletion}
There is an an isomorphism 
\begin{equation}
\widehat{L}\cong C \ltimes  \ZZ\llb x\rrb,
\end{equation}
and the map $L\to \widehat{L}$ is induced by $\varphi.$
\end{lemma}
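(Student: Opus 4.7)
The plan is to compute the lower central series of $L$ explicitly and then pass to the inverse limit, identifying the completion of $\ZZ[t,t^{-1}]$ at the ideal $(t-1)$ with $\ZZ\llb x \rrb$ via the substitution $t = 1+x$.

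First I would compute $\gamma_n(L)$ for all $n\geq 2$. Write $A = \ZZ[t,t^{-1}]$, so that $L = C\ltimes A$ with $A$ abelian and $L/A \cong C$ also abelian. Hence $[L,L]\subseteq A$. A direct commutator calculation in the semidirect product yields
\begin{equation}
[a\cdot t^m,\, t^k] \;=\; (t^k-1)\cdot a\cdot t^m \in A,
\end{equation}
so $[L,L]$ contains $(t-1)A$ and in fact equals it (every $(t^k-1)$ is divisible by $(t-1)$ in $A$). Now one shows by induction that $\gamma_n(L) = (t-1)^{n-1}A$ for $n\geq 2$: assuming this for $n$, note that $\gamma_n(L)\subseteq A$ is abelian, hence $[\gamma_n(L), A]=1$, so $\gamma_{n+1}(L) = [\gamma_n(L), L]$ is generated as a $C$-module by commutators with $t$, which by the same formula as above gives $(t-1)\cdot (t-1)^{n-1}A = (t-1)^n A$. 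Consequently
\begin{equation}
L/\gamma_n(L) \;\cong\; C\ltimes A/(t-1)^{n-1}A.
\end{equation}

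Next I would identify the inverse limit of $A/(t-1)^nA$ with $\ZZ\llb x\rrb$. The key observation is that $\varphi$ induces, for each $n$, an isomorphism
\begin{equation}
\varphi_n : A/(t-1)^n A \;\xrightarrow{\;\cong\;}\; \ZZ\llb x\rrb/(x)^n \;=\; \ZZ[x]/(x)^n.
\end{equation}
Indeed, writing $A = \ZZ[x,(1+x)^{-1}]$ under $t\mapsto 1+x$, modulo $(x)^n$ the element $(1+x)$ is a unit (its inverse is $1-x+x^2-\ldots \pmod{x^n}$), so the localization $\ZZ[x,(1+x)^{-1}]/(x)^n$ coincides with $\ZZ[x]/(x)^n$; and $(t-1)$ maps exactly to $x$. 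Taking $\varprojlim_n$ of these compatible isomorphisms gives the $(t-1)$-adic completion of $A$ as $\ZZ\llb x\rrb$.

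Finally I would assemble the pieces. The inverse system of short exact sequences
\begin{equation}
1 \to A/(t-1)^{n-1}A \to L/\gamma_n(L) \to C \to 1
\end{equation}
consists of epimorphisms on the kernels (hence $\varprojlim^1 = 0$) and has the constant quotient $C$, so passing to the limit preserves exactness and produces
\begin{equation}
\widehat{L} \;\cong\; C\ltimes \varprojlim_n A/(t-1)^{n-1}A \;\cong\; C\ltimes \ZZ\llb x\rrb,
\end{equation}
with the action of $C$ transported by $\varphi$ to multiplication by $(1+x)$. Tracking the natural map $L\to \widehat L$ through this chain of identifications shows that it is precisely $\mathrm{id}_C\ltimes \varphi$ on the respective factors. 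There is no real obstacle here; the only point to handle carefully is the fact that $(1+x)$ becomes a unit after passing modulo $x^n$, which makes the localization disappear in the completion.
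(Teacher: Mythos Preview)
Your argument is correct and follows the same route as the paper, only more explicitly. The paper's proof simply cites \cite[Prop.~4.7]{ivanov2016problem} to obtain $\widehat{L}\cong C\ltimes \ZZ[t,t^{-1}]^\wedge_{(t-1)}$ and then makes the very same identification $\ZZ[t,t^{-1}]/(t-1)^n\cong\ZZ[x]/(x^n)$ via $t\mapsto 1+x$ that you carry out. Your computation $\gamma_n(L)=(t-1)^{n-1}A$ is exactly the content of that cited proposition specialized to $L$, so you are unpacking a reference the paper chose not to. The only cosmetic point is that your displayed commutator formula depends on conventions (the literal value is a unit multiple of what you wrote), but the conclusion $[L,L]=(t-1)A$ and the induction are unaffected.
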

\begin{proof} It follows from
\cite[Prop.4.7]{ivanov2016problem} that $\widehat{L}\cong C\ltimes {\ZZ[t,t^{-1}]}^\wedge_I,$ where ${\ZZ[t,t^{-1}]}^\wedge_I= \varprojlim \ZZ[t,t^{-1}]/((t-1)^n).$ It is easy to see that there is an isomorphism $\ZZ[x]/(x^n)\cong \ZZ[t,t^{-1}]/((t-1)^n), x\mapsto t-1.$ Then $\ZZ[t,t^{-1}]^\wedge_I=\varprojlim \ZZ[x]/(x^n) = \ZZ\llb x \rrb.$ The assertion follows. 
\end{proof}

\subsection{The second homology}

For any $C$-module $M$ we denote by $M_C=H_0(C,M)$ its module of $C$-coinvarians and by $M^C=H^0(C,M)$ the module of $C$-invarians. We also denote by $\Lambda^2 M$ its exterior square over $\ZZ$ considered as a module over $C$ with the diagonal action: $t(m_1\wedge m_2) = tm_1 \wedge tm_2.$

\begin{lemma}\label{lemma:homologyOfCltimesU} 
For any $C$-group $U$ and any $n$ there is a short exact sequence 
\begin{equation}
0 \longrightarrow H_n(U)_C \longrightarrow H_n(C\ltimes U) \longrightarrow H_{n-1}(U)^C \longrightarrow 0.    
\end{equation}
\end{lemma}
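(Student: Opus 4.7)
The plan is to apply the Lyndon--Hochschild--Serre homological spectral sequence to the group extension
\begin{equation*}
1 \longrightarrow U \longrightarrow C\ltimes U \longrightarrow C \longrightarrow 1,
\end{equation*}
which yields $E^2_{p,q} = H_p(C, H_q(U)) \Rightarrow H_{p+q}(C\ltimes U)$, where $C$ acts on $H_q(U)$ via the given semidirect product action.

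Since $C\cong \ZZ$ is infinite cyclic, it admits the length-one free resolution $0\to \ZZ[C] \xrightarrow{t-1} \ZZ[C] \to \ZZ \to 0$. Applying $-\otimes_{\ZZ[C]} M$ to this resolution and taking homology shows that for any $C$-module $M$ one has $H_p(C,M)=0$ for all $p\geq 2$, together with the identifications $H_0(C,M) = M_C = M/(t-1)M$ and $H_1(C,M) = M^C = \Ker(t-1)$. Consequently only the columns $p=0$ and $p=1$ of the $E^2$-page are non-zero.

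Because the differentials $d_r\colon E^r_{p,q}\to E^r_{p-r,q+r-1}$ for $r\geq 2$ either start in column $p\leq 1$ and land in column $p-r\leq -1$ (hence in a zero group), or else originate in a zero group, all higher differentials vanish. Therefore the spectral sequence collapses at $E^2$, i.e.\ $E^\infty_{p,q} = E^2_{p,q}$ for all $p,q$. The resulting two-step filtration $0 = F_{-1}H_n \subseteq F_0 H_n \subseteq F_1 H_n = H_n(C\ltimes U)$ has associated graded pieces $F_0H_n/F_{-1}H_n = E^\infty_{0,n} = H_n(U)_C$ and $F_1H_n/F_0H_n = E^\infty_{1,n-1} = H_{n-1}(U)^C$, which assembles into the required short exact sequence
\begin{equation*}
0 \longrightarrow H_n(U)_C \longrightarrow H_n(C\ltimes U) \longrightarrow H_{n-1}(U)^C \longrightarrow 0.
\end{equation*}

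There is no real obstacle here; the only subtlety is the mild bookkeeping of confirming that the columns $p\geq 2$ vanish and that no higher differentials can survive, both of which reduce immediately to the fact that $C$ has cohomological dimension one.
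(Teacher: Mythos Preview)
Your proof is correct and follows exactly the same approach as the paper: the Lyndon--Hochschild--Serre spectral sequence for $1\to U\to C\ltimes U\to C\to 1$ has only the two columns $p=0,1$ because $C\cong\ZZ$, hence collapses at $E^2$ and yields the stated short exact sequence. The paper's proof is just a terser version of what you wrote.
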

\begin{proof}
The spectral sequence of the extension $1\to U\to C\ltimes U \to C \to 1$ consists of two columns: $H_0(C,H_n(M))=
H_n(M)_C$ and $H_1(C,H_n(M))=H^0(C,H_n(M))=H_n(M)^C.$ The assertion follows. 
\end{proof}

\begin{lemma}\label{lemma:secondHomologyOfCltimesM}
For any $C$-module $M$ such that $M^C=0$ there is a natural isomorphism 
\begin{equation}
H_2(C\ltimes M)\cong (\Lambda^2 M)_C.    
\end{equation}
\end{lemma}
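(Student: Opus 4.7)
The plan is to apply the previous lemma, Lemma~\ref{lemma:homologyOfCltimesU}, with $U = M$ viewed as an abelian group (i.e.\ a group under addition), and to identify the low-dimensional homology of an abelian group in terms of itself and its exterior square.

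More precisely, specializing the short exact sequence of Lemma~\ref{lemma:homologyOfCltimesU} to $n = 2$ gives
\begin{equation}
0 \longrightarrow H_2(M)_C \longrightarrow H_2(C\ltimes M) \longrightarrow H_1(M)^C \longrightarrow 0.
\end{equation}
Now I would invoke the classical computation of the (integral) group homology of an abelian group: for an abelian group $M$ one has natural isomorphisms $H_1(M) \cong M$ and $H_2(M) \cong \Lambda^2 M$ (the latter follows, for instance, from the K\"unneth formula for a free abelian group together with a direct-limit argument, or equivalently from the fact that the bar complex of an abelian group computes $\Lambda^\ast M$ in low degrees). Both isomorphisms are natural in $M$, so they carry the $C$-action induced by the $C$-action on $M$ to, respectively, the tautological action on $M$ and the diagonal action on $\Lambda^2 M$.

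Substituting these identifications into the sequence above yields
\begin{equation}
0 \longrightarrow (\Lambda^2 M)_C \longrightarrow H_2(C\ltimes M) \longrightarrow M^C \longrightarrow 0.
\end{equation}
Under the assumption $M^C = 0$, the right-hand term vanishes and the left-hand inclusion becomes the desired natural isomorphism $H_2(C\ltimes M) \cong (\Lambda^2 M)_C$.

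The only non-routine point is the naturality of $H_2(M) \cong \Lambda^2 M$ with respect to the $C$-action, but this is automatic since both sides are covariant functors in $M$ and the isomorphism is defined functorially. No serious obstacle is expected; the proof is essentially a direct citation of Lemma~\ref{lemma:homologyOfCltimesU} combined with the standard identification of the low-dimensional homology of an abelian group.
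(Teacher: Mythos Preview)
Your proof is correct and matches the paper's approach exactly: the paper also invokes Lemma~\ref{lemma:homologyOfCltimesU} for $n=2$ together with the natural isomorphism $H_2(M)\cong \Lambda^2 M$ for abelian $M$. Your version simply spells out in more detail what the paper compresses into a single sentence.
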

\begin{proof} It follows from Lemma 
\ref{lemma:homologyOfCltimesU} for $n=2$ and the the fact that the second homology of an abelian group is naturally isomorphic to its exterior square.
\end{proof}

\begin{theorem}\label{th:double_lamplighter}
The cokernel of the map 
\begin{equation}\label{eq:H_2D}
H_2(L) \longrightarrow H_2(\widehat{L})
\end{equation}
is not cotorsion. 
\end{theorem}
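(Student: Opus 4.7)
The plan is to exhibit $\mathrm{coker}(H_2(L)\to H_2(\widehat L))$ as surjecting onto a group that Theorem~\ref{th:power_series_cotorsion} declares non-cotorsion, for the specific choice $P(x,y)=x+y+xy$ (so $U(x)=x$ and $V(x)=-(1+x)$). Since a quotient of a cotorsion group is cotorsion, this forces the cokernel itself to fail to be cotorsion. The technical heart of the matter is already packaged in Theorem~\ref{th:power_series_cotorsion}; what remains is an algebraic identification.

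I would first apply Lemma~\ref{lemma:secondHomologyOfCltimesM} to the $C$-modules $\ZZ[t,t^{-1}]$ and $\ZZ\llb x\rrb$. The hypothesis $M^C=0$ is immediate in both cases (a $t$-invariant Laurent polynomial or power series is easily seen to vanish), yielding $H_2(L)\cong(\Lambda^2\ZZ[t,t^{-1}])_C$ and $H_2(\widehat L)\cong(\Lambda^2\ZZ\llb x\rrb)_C$, with the connecting map induced by $\varphi:t\mapsto 1+x$ of Lemma~\ref{lemma:Lcompletion}. Next I would consider the $C$-equivariant surjection
\[
\ZZ\llb x\rrb\otimes\ZZ\llb x\rrb\twoheadrightarrow\ZZ\llb x\rrb\cdot\ZZ\llb y\rrb,\qquad f\otimes g\mapsto f(x)g(y),
\]
where the diagonal $C$-action on the left corresponds to multiplication by $\varphi(t)\varphi(t)=(1+x)(1+y)$ on the right. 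The image of the subgroup generated by the diagonal elements $f\otimes f$ (which yields $\Lambda^2\ZZ\llb x\rrb$ upon quotienting) is contained in $\Sym(\ZZ\llb x\rrb\cdot\ZZ\llb y\rrb)$, and $C$-coinvariants on the right amount to quotienting by multiples of $(1+x)(1+y)-1=x+y+xy$. Composing these observations gives a surjection
\[
H_2(\widehat L)\twoheadrightarrow \frac{\ZZ\llb x\rrb\cdot\ZZ\llb y\rrb}{(x+y+xy)\cdot\ZZ\llb x\rrb\cdot\ZZ\llb y\rrb+\Sym(\ZZ\llb x\rrb\cdot\ZZ\llb y\rrb)}.
\]

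Because $\varphi(\ZZ[t,t^{-1}])=\ZZ[x,(1+x)^{-1}]\subseteq\ZZ[x]^l$, the image of $H_2(L)$ in the above target is contained in the image of $\ZZ[x]^l\cdot\ZZ[y]^l$. Consequently $\mathrm{coker}(H_2(L)\to H_2(\widehat L))$ surjects further onto
\[
\frac{\ZZ\llb x\rrb\cdot\ZZ\llb y\rrb}{(x+y+xy)\cdot\ZZ\llb x\rrb\cdot\ZZ\llb y\rrb+\ZZ[x]^l\cdot\ZZ[y]^l+\Sym(\ZZ\llb x\rrb\cdot\ZZ\llb y\rrb)}.
\]
For $P(x,y)=x+y+xy$, the symmetry $P(y,x)=P(x,y)$ is visible and $U/V=-x/(1+x)\in\ZZ[x]^l\smallsetminus\ZZ$, so the hypotheses of Theorem~\ref{th:power_series_cotorsion} are met and this last group is not cotorsion, finishing the argument.

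The step demanding the most care is the identification of the diagonal $C$-action on $\Lambda^2\ZZ\llb x\rrb$ with multiplication by $(1+x)(1+y)$ after moving to the power-series model, together with the bookkeeping of successive quotients that confirms the image of $H_2(L)$ is absorbed into the $\ZZ[x]^l\cdot\ZZ[y]^l$ summand; modulo this, the proof is a straightforward reduction to the already established Theorem~\ref{th:power_series_cotorsion}.
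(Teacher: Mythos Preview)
Your proposal is correct and follows essentially the same route as the paper's own proof: identify $H_2$ of both groups via Lemma~\ref{lemma:secondHomologyOfCltimesM}, pass from $\Lambda^2\ZZ\llb x\rrb$ to $\ZZ\llb x\rrb\cdot\ZZ\llb y\rrb/\Sym$ via $f\otimes g\mapsto f(x)g(y)$, read off the $C$-coinvariants as the quotient by $(x+y+xy)$, observe that the image of $H_2(L)$ lands in $\ZZ[x]^l\cdot\ZZ[y]^l$, and invoke Theorem~\ref{th:power_series_cotorsion} with $P(x,y)=x+y+xy$. The only cosmetic differences are that you explicitly verify $M^C=0$ and spell out $U(x)=x$, $V(x)=-(1+x)$, which the paper leaves implicit.
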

\begin{proof}
By Lemma \ref{lemma:secondHomologyOfCltimesM} and Lemma \ref{lemma:Lcompletion} we have isomorphisms 
\begin{equation}
H_2(L)\cong (\Lambda^2 \ZZ[t,t^{-1}])_C, \hspace{1cm}
H_2(\widehat{L}) \cong (\Lambda^2 \ZZ[x])_C
\end{equation}
and the map $H_2(L)\to H_2(\widehat{L})$ is isomorphic to the map 
$(\Lambda^2 \varphi)_C.$
Then the cokernel 
\begin{equation}
A:={\sf Coker}\left((\Lambda^2 \varphi)_C :  ( \Lambda^2 \ZZ[t,t^{-1}])_C \longrightarrow (\Lambda^2 \ZZ\llb x \rrb)_C \right)
\end{equation}
is isomorphic to the  cokernel of
\eqref{eq:H_2D}. Consider a ring homomorphism 
\begin{equation}
\theta:\ZZ\llb x \rrb^{\otimes 2}\to \ZZ\llb x,y \rrb, \hspace{1cm}  \theta(f\otimes g)=f(x)g(y).  
\end{equation}
If we define an action of $C$ on $\ZZ\llb x,y \rrb$ by multiplication on the polynomial \mbox{$(1+x)(1+y),$} then $\theta$ becomes a homomorphism of $C$-modules.
Note that ${\rm Im}(\theta)=\ZZ\llb x \rrb\cdot \ZZ \llb y \rrb.$ Moreover, if we denote by $D$ the subgroup of $\ZZ\llb x \rrb^{\otimes 2}$ generated by the elements of the form $f\otimes f,$ where $f\in \ZZ\llb x \rrb,$ we obtain $\theta(D)\subseteq {\sf Sym}(\ZZ\llb x \rrb\cdot \ZZ \llb y \rrb).$ So  $\theta$ induces an epimorphism 
\begin{equation}
\theta': \Lambda^2 \ZZ\llb x \rrb  \epi  
\frac{\ZZ\llb x \rrb \cdot \ZZ \llb y \rrb}{{\sf Sym}(  \ZZ\llb x \rrb \cdot \ZZ \llb y \rrb)}.
\end{equation}
Since $(t-1) \cdot \ZZ\llb x \rrb \cdot \ZZ\llb y \rrb = (x+y+xy) \cdot  \ZZ\llb x \rrb \cdot \ZZ\llb y \rrb,$ the map $\theta'$ induces an epimorphism
\begin{equation}
(\Lambda^2 \ZZ\llb x \rrb)_C \epi \frac{\ZZ\llb x \rrb \cdot \ZZ\llb y \rrb }{(x+y+xy) \cdot \ZZ\llb x \rrb \cdot \ZZ\llb y \rrb +{\sf Sym}(\ZZ[x]\cdot \ZZ[y])}.
\end{equation}

The image of the map $\theta\circ \varphi^{\otimes 2}:  \ZZ[t,t^{-1}]^{\otimes 2} \longrightarrow \ZZ\llb x,y \rrb$  is a subring of $\ZZ\llb x,y\rrb$ generated by $x,y, (1+x)^{-1},(1+y)^{-1}.$ Then we obtain a well defined epimorphism
\begin{equation}
A \epi \frac{\ZZ\llb x \rrb \cdot \ZZ\llb y \rrb }{(x+y+xy) \cdot \ZZ\llb x \rrb \cdot \ZZ\llb y \rrb + \ZZ[x]^l\cdot \ZZ[y]^l + {\sf Sym}(\ZZ[x]\cdot \ZZ[y])  }.    
\end{equation}
By Theorem \ref{th:power_series_cotorsion} the image of this epimorphism is not cotosion. It follows that $A$ is not cotorsion.  
\end{proof}

\section{Proof of Theorem A}
\label{sec:proof_of_TheoremA}
We prove that for a free group $F$ of rank at least $2$ the group $H_2(\widehat{F})$ is not cotorsion. 

The lamplighter group $L$ is a $2$-generated group. Then for any free group $F$ of rank $\geq 2$ there is an epimorphism  $F\epi L.$ By Theorem B we obtain that 
\begin{equation}
C:={\rm Coker}( H_2(\widehat{F}) \oplus H_2(L) \longrightarrow H_2( \widehat{L} ) ) 
\end{equation}
is cotorsion. On the other hand by Theorem \ref{th:double_lamplighter} the group 
\begin{equation}
 B:={\rm Coker}( H_2(L) \longrightarrow H_2(\widehat{L}))   
\end{equation}
is not cotorsion. Consider the exact sequence
\begin{equation}
H_2( \widehat{F} ) \overset{\varphi}\longrightarrow B \longrightarrow C \longrightarrow 0.
\end{equation}
 We claim that the group ${\rm Im}(\varphi)$ is not cotorsion. Indeed, $B$ is an extension of a cotorsion group $C$ by ${\rm Im}(\varphi).$ If ${\rm Im}(\varphi)$ was cotorsion, $B$ would be also cotorsion. Then ${\rm Im}(\varphi)$ is not cotorsion, and hence,  $H_2(\widehat{F})$ is not cotorsion. 
 
  \section{Lie algebras}
 
 In this paper by a Lie algebra we always mean a Lie algebra over $\mathbb{Z}.$ In this section we discuss versions of theorems A and B for the case of Lie algebras. We are not going to give detailed proofs because they are very similar to the case of groups. We just write down some preliminary results for Lie algebras which form a basis for the similar proofs.

 There are several non-equivalent definitions of homology of a Lie algebra over $\ZZ$ 
 (see \cite{ivanov2020homology}) but all of them coincide if we are interested only in the second homology \cite[Th.8.4]{ivanov2020homology}. For example, we can define $H_n(\gg)={\sf Tor}_n^{U\gg}(\ZZ,\ZZ),$ where $U\gg$ is the universal enveloping algebra. Then for any presentation of a Lie algebra as a quotient of a free Lie algebra $\gg=\ff/\rr,$ we have an analogue of Hopf's isomorphism 
 \begin{equation}
    H_2(\gg)\cong \frac{\rr\cap [\ff,\ff]}{[\rr,\ff]},
 \end{equation}
 which is natural in the short exact sequence $0\to \rr\to \ff\to \gg\to 0.$ 
 Moreover, this can be generalized to the following proposition.

 \begin{proposition}[cf. Prop.  \ref{prop:hopf}]
 \label{prop:Hopf_formula_for_Lie_alg}
 For any ideal $\mathfrak{u}$ of a Lie algebra $\mathfrak{g}$ there is an exact sequence
\begin{equation}
    H_2(\gg) \to H_2(\gg/\mathfrak{u}) \to \frac{\mathfrak{u}\cap [\gg,\gg]}{[\mathfrak{u},\gg]} \to 0,
\end{equation}
which is natural in the short exact sequence $0\to \uu \to \gg\to \gg/\uu\to 0.$
\end{proposition}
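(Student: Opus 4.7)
The plan is to mirror the proof of the group-theoretic Hopf formula (Proposition~\ref{prop:hopf}) word-for-word, replacing commutator subgroups by Lie brackets and the Lyndon--Hochschild--Serre spectral sequence by its Lie-algebra analogue. Fix a short exact sequence of Lie algebras $0\to \uu \to \gg \to \gg/\uu \to 0$; using the definition $H_n(\gg) = \mathsf{Tor}_n^{U\gg}(\ZZ,\ZZ)$ one obtains a Hochschild--Serre--type spectral sequence converging to $H_{*}(\gg)$ with $E^2_{p,q} = H_p(\gg/\uu, H_q(\uu))$. The associated five-term exact sequence reads
\begin{equation}
H_2(\gg) \to H_2(\gg/\uu) \to H_0(\gg/\uu, H_1(\uu)) \to H_1(\gg) \to H_1(\gg/\uu) \to 0.
\end{equation}

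Next I would identify the two middle terms: $H_1(\uu) \cong \uu/[\uu,\uu]$ as an abelian group, endowed with the $\gg/\uu$-action induced by the adjoint action of $\gg$, and therefore $H_0(\gg/\uu, H_1(\uu)) \cong \uu/([\uu,\uu]+[\gg,\uu]) \cong \uu/[\gg,\uu]$, since $[\uu,\uu]\subseteq [\gg,\uu]$. Similarly $H_1(\gg)\cong \gg/[\gg,\gg]$, and the connecting map $\uu/[\gg,\uu]\to \gg/[\gg,\gg]$ is visibly induced by the inclusion $\uu\hookrightarrow\gg$. Its kernel is therefore $(\uu\cap[\gg,\gg])/[\gg,\uu]$, which gives the desired exact sequence. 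Naturality follows from the naturality of the spectral sequence in the short exact sequence of Lie algebras.

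If one prefers to avoid invoking the integral Hochschild--Serre spectral sequence, an alternative route is to use the integral Hopf formula stated just above the proposition: pick a free presentation $0\to\rr\to\ff\to\gg\to 0$ with $\ff$ a free Lie algebra, let $\tilde\uu\subseteq\ff$ be the preimage of $\uu$, and use $H_2(\gg)=(\rr\cap[\ff,\ff])/[\rr,\ff]$ and $H_2(\gg/\uu)=(\tilde\uu\cap[\ff,\ff])/[\tilde\uu,\ff]$, with the map between them induced by $\rr\hookrightarrow\tilde\uu$. A direct manipulation, using $\rr\subseteq\tilde\uu$ to rewrite $\uu\cap[\gg,\gg]=((\tilde\uu\cap[\ff,\ff])+\rr)/\rr$ and $[\gg,\uu]=([\tilde\uu,\ff]+\rr)/\rr$, identifies the cokernel of $H_2(\gg)\to H_2(\gg/\uu)$ with $(\tilde\uu\cap[\ff,\ff])/([\tilde\uu,\ff]+\rr\cap[\ff,\ff])\cong(\uu\cap[\gg,\gg])/[\gg,\uu]$.

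The main obstacle is the first approach's reliance on the Hochschild--Serre spectral sequence over $\ZZ$, which is not automatic because $\uu$ need not be $\ZZ$-flat as a submodule of $\gg$; one would need to construct it by resolving $\ZZ$ over $U\gg$ and exploiting the Poincar\'e--Birkhoff--Witt filtration, or to cite the relevant reference. The second approach sidesteps this by depending only on the Hopf formula, at the cost of a bookkeeping computation with preimages; this is the route I would take, as it keeps the argument self-contained and parallel in form (including naturality) to the group case.
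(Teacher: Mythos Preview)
Your proposal is correct, and the second route you outline---choosing a free presentation $0\to\rr\to\ff\to\gg\to 0$, letting $\tilde\uu$ be the preimage of $\uu$, and identifying the cokernel of $H_2(\gg)\to H_2(\gg/\uu)$ with $(\tilde\uu\cap[\ff,\ff])/([\tilde\uu,\ff]+\rr\cap[\ff,\ff])\cong(\uu\cap[\gg,\gg])/[\uu,\gg]$---is exactly the argument the paper gives, including the modular-law step $[\tilde\uu,\ff]+\rr\cap[\ff,\ff]=([\tilde\uu,\ff]+\rr)\cap[\ff,\ff]$ and the second/third isomorphism theorems. The paper does not attempt the spectral-sequence approach for Lie algebras at all; your instinct to avoid it (because of the integral coefficient issues you flag) and go straight to the Hopf-formula manipulation matches the authors' choice precisely.
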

\begin{proof}
Consider a presentation $\gg=\ff/\rr.$ Denote by $\mathfrak{s}$ the preimage of $\uu$ in $\ff.$ Then $\gg/\uu \cong \ff/\mathfrak{s}.$ Therefore the cokernel of the homomorphism $H_2(\gg)\to H_2(\gg/\uu)$ is isomorphisc to the cokernel of $ \frac{\rr \cap [\ff,\ff] }{[\rr,\ff]}  \to \frac{\mathfrak{s} \cap [\ff,\ff]}{[\mathfrak{s},\ff]},$ which is equal to $A:=\frac{\mathfrak{s} \cap [\ff,\ff]}{[\mathfrak{s},\ff]+\rr\cap[\ff,\ff]}.$ By the modular law we have 
\[
[\mathfrak{s},\ff]+\rr\cap[\ff,\ff] = ([\mathfrak{s},\ff]+\rr)\cap[\ff,\ff]=([\mathfrak{s},\ff]+\rr)\cap \mathfrak{s}\cap[\ff,\ff].
\] By the second and the third  isomorphism theorems we have 
\[A= \frac{ \mathfrak{s}\cap [\ff,\ff] }{([\mathfrak{s},\ff]+\rr)\cap (\mathfrak{s} \cap [\ff,\ff])} = \frac{\mathfrak{s}\cap [\ff,\ff]+\rr}{[\mathfrak{s},\ff] + \rr}= \frac{\uu\cap [\gg,\gg]}{[\uu,\gg]} .\]
\end{proof} 

In the proof of Theorem B we used the fact that for a finitely generated free group $F$ there is an isomorphism $F_{ab}\cong \widehat{F}_{ab}$ that was proved in  \cite[Th. 13.3 (iv)]{bousfield1977homological} and \cite[Th.2.1]{baumslag1977inverse}. Here we prove an analogue of this result for Lie algebras.

\begin{lemma}[{cf. \cite{baur1980note}}]
\label{lemma:central_quotients_iso_Lie_alg}
Let $\gg$ be a finitely generated Lie algebra. Then the map $\eta:\gg\to \widehat{\gg}$ induces an isomorphism
\begin{equation}
\gg/\gamma_n(\gg)\cong \widehat{\gg}/\gamma_n( \widehat{\gg}). 
\end{equation}
In particular, $\gg_{ab}\cong \widehat{\gg}_{ab}.$
\end{lemma}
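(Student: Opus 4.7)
The plan is to show the induced homomorphism $\eta_n:\gg/\gamma_n(\gg)\to\widehat{\gg}/\gamma_n(\widehat{\gg})$ is an isomorphism. The canonical projection $p_n:\widehat{\gg}\epi \gg/\gamma_n(\gg)$ out of the inverse limit annihilates $\gamma_n(\widehat{\gg})$, since its target is nilpotent of class $<n$, and so it factors through a surjection $\bar p_n:\widehat{\gg}/\gamma_n(\widehat{\gg})\epi \gg/\gamma_n(\gg)$. The composition $\bar p_n\circ \eta_n$ is evidently the identity, so $\eta_n$ is injective and the lemma reduces to the identity $K_n=\gamma_n(\widehat{\gg})$, where $K_n:=\Ker(p_n)$. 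The inclusion $\gamma_n(\widehat{\gg})\subseteq K_n$ is trivial; the content is the reverse inclusion.

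Fix a finite generating set $x_1,\dots, x_k$ of $\gg$. The first step is the standard graded identification $K_m/K_{m+1}\cong \gamma_m(\gg)/\gamma_{m+1}(\gg)$, which, thanks to the finite generation, is a finitely generated abelian group spanned by the classes of the left-normed $m$-fold iterated brackets in the $x_i$'s. Given $y\in K_n$, iterating this observation recursively produces a convergent expansion
\[
  y \;=\; \sum_{m\geq 1} z_m,
\]
where each $z_m$ is a finite $\ZZ$-linear combination of left-normed $(n+m-1)$-fold iterated brackets of the elements $\eta(x_1),\dots,\eta(x_k)$, so in particular $z_m\in \gamma_{n+m-1}(\widehat{\gg})\subseteq K_{n+m-1}$, and the partial sums converge to $y$ in the inverse limit topology.

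The principal obstacle is to upgrade this convergent series to a genuine element of $\gamma_n(\widehat{\gg})$, i.e.\ to a \emph{finite} sum of $n$-fold brackets of elements of $\widehat{\gg}$. This is done by a recursive collapse. Every summand in every $z_m$ is a left-normed iterated bracket whose outermost factor is one of the finitely many generators $\eta(x_j)$; grouping summands across all $m$ by this outermost factor and using multilinearity of $[-,-]$ together with continuity of the bracket on the complete Lie algebra $\widehat{\gg}$, one obtains
\[
  y \;=\; \sum_{j_1=1}^k \bigl[\eta(x_{j_1}),\, W_{j_1}\bigr],
\]
where $W_{j_1}\in\widehat{\gg}$ is the convergent sum of all inner arguments whose outermost factor was $\eta(x_{j_1})$ (convergence is automatic since the contribution from $z_m$ lies in $K_{n+m-2}$).

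Applying the same collapse recursively to each $W_{j_1}$, then to the resulting inner arguments, and so on for a total of $n-1$ rounds, one finally arrives at a finite expression
\[
  y \;=\; \sum_{(j_1,\dots,j_{n-1})\in\{1,\dots,k\}^{n-1}} \bigl[\eta(x_{j_1}),\bigl[\eta(x_{j_2}),\dots\bigl[\eta(x_{j_{n-1}}),\,W_{j_1,\dots,j_{n-1}}\bigr]\cdots\bigr]\bigr]
\]
with each $W_{j_1,\dots,j_{n-1}}\in \widehat{\gg}$ (the innermost convergent sum takes values in the whole space $\widehat{\gg}=\gamma_1(\widehat{\gg})$, where convergence of a Cauchy series poses no problem). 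Being a finite sum of $n$-fold brackets of elements of $\widehat{\gg}$, this expression lies in $\gamma_n(\widehat{\gg})$, which yields $K_n\subseteq \gamma_n(\widehat{\gg})$ and finishes the proof.
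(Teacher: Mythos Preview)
Your proof is correct and is essentially the paper's argument: both show $K_n\subseteq\gamma_n(\widehat{\gg})$ by writing an element of $K_n$ as a convergent series in iterated brackets of the generators $\eta(x_j)$ and then regrouping by the outermost generator to obtain a finite sum $\sum_j[\eta(x_j),b^{(j)}]$ with $b^{(j)}\in K_{n-1}$. The only difference is presentational---the paper packages your $(n-1)$-fold recursive collapse as a single collapse step together with induction on $n$.
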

\begin{proof} By definition of $\widehat{\gg}$ we have a map $\widehat{\gg}\epi \gg/\gamma_n(\gg).$ We just need to prove that $\gamma_n(\widehat{\gg})={\sf Ker}( \widehat{\gg}\epi \gg/\gamma_n(\gg)).$ Since $\gg/\gamma_n(\gg)$ is nilpotent of class $\leq n-1,$ we obtain $\gamma_n(\widehat{\gg})\subseteq {\sf Ker}( \widehat{\gg}\epi \gg/\gamma_n(\gg)).$ So we only need to prove ${\sf Ker}( \widehat{\gg}\epi \gg/\gamma_n(\gg))   \subseteq \gamma_n(\widehat{\gg}) .$ The proof is by induction on $n.$ For $n=1$ the statement is obvious. Prove the step.

It is easy to see that any element $a\in {\sf Ker}( \widehat{\gg}\epi \gg/\gamma_n(\gg))$ can be written as $a=\sum_{i=n}^\infty \eta(a_i),$ where $a_i\in \gamma_i(\gg).$ If we denote by  $x_1,\dots,x_n$ a set of generators of $\gg$,
then any element $b\in \gamma_i(\gg)$ for $i>1$ can be  presented as $b=[b^{(1)},x_1]+\dots +[b^{(n)},x_n],$ where $b^{(1)},\dots,b^{(n)}\in \gamma_{i-1}(\gg).$   Therefore $$a=\sum_{i=n}^\infty \eta(a_i)=[ b^{(1)},\eta(x_1)]+\dots + [b^{(n)},\eta(x_n)],$$ where $b^{(j)}=\sum_{i=n}^\infty \eta(a_i^{(j)})$ and $a_i^{(j)}\in \gamma_{i-1}(\gg).$ Hence $b^{(j)}\in {\rm Ker}(\widehat{\gg}\epi \gg/\gamma_{n-1}(\gg)).$ By induction hypothesis we have $b^{(j)}\in \gamma_{n-1}(\widehat{\gg}).$ Therefore, $a\in \gamma_n(\widehat{\gg}).$
\end{proof}

\vspace{0.3cm}\noindent {\bf Theorem B for Lie algebras}. {\it Let $\gg$ be a finitely generated Lie algebra (over $\ZZ$) and $\ff\epi \gg$ be a presentation, where $\ff$ is a free Lie algebra of finite rank. Then the cokernel of the map
\begin{equation}
H_2(\widehat{\ff})\oplus H_2(\gg) \longrightarrow H_2(\widehat{\gg})
\end{equation}
is cotorsion. }
\vspace{0.3cm}
\begin{proof}
The proof is similar to the proof of Theorem B for groups (see Section~\ref{subsection:proof1}), so we provide only a short sketch here. Set $\rr$ to be the kernel of $\ff\to \gg$ and $\mathfrak{h}^n = \gamma_n(\mathfrak{h})$ for any $\mathfrak{h}$. Arguing as in the proof of Theorem~B for groups (with Proposition~\ref{prop:Hopf_formula_for_Lie_alg} used instead of the Hopf's formula) one can see that it is enough to prove that the cokernel of the map
\[
    \frac{\rr\cap \ff^2}{[\rr,\ff]}\longrightarrow \frac{\overline{\rr}\cap \widehat{\ff}^2}{[\overline{\rr}, \widehat{\ff}]}
\]
is cotorsion, where $\overline{\rr} = \Ker(\widehat{\ff}\to\widehat{\gg})\cong \varprojlim \rr/(\rr\cap \ff^n)$.

Due to Lemma~\ref{lemma:central_quotients_iso_Lie_alg} we have $\ff_{ab}\cong\widehat{\ff}_{ab}$, hence the same arguments as in the proof of Theorem~B for groups imply that 
\[
    \overline{\rr}\cap \widehat{\ff}^2\cong \varprojlim(\rr\cap \ff^2)/(\rr\cap \ff^n).
\]
Since the image of the subgroup $\rr\cap \ff^2$ in $\overline{\rr}\cap \widehat{\ff}^2$ is dense with respect to the inverse limit topology, we conclude using Theorem~\ref{th:barnea_shelah}.
\end{proof}

A module $M$ over a Lie algebra $\gg$ is a module over the universal enveloping algebra $U\gg.$ In other words, a module $M$ is an abelian group together with a homomorphism of Lie algebras $\gg \to {\sf End}_{\ZZ}(M).$ If $\eta: \gg\to U\gg$ is the embedding, then we set $[a,m]=\eta(a)m$ for any $a\in \gg$ and $m\in M.$ We also set $M^\gg=\{ m\in M\mid [m,a]=0 \text{ for } a\in \gg \}$ and $M_\gg=M/[\gg,M].$ For any two $\gg$-modules $M$ and $N$ we define a structure of $\gg$-module on $M\otimes N$ by the formula $[a,m\otimes n] = [a,m] \otimes n + m\otimes [a,n]$ for $a\in \gg.$ Note that the abelian subgroup of $M\otimes M$ generated by the elements of the form $m\otimes m$ for $m\in M$ is a $\gg$-submodule because of the equation
$$a(m\otimes m)=(am+m)\otimes (am+m) - (am)\otimes (am) - m\otimes m,$$ and we can consider $\Lambda^2 M$ as a $\gg$-module. 
For any $\gg$-module $M$ we consider the semidirect product $\gg \ltimes M,$ whose underlying abelian group is $\gg\oplus M$ and the bracket is defied by $[(a_1,m_1),(a_2,m_2)]=([a_1,a_2],[a_1,m_2]-[a_2,m_1]).$

\begin{lemma}[{cf. Lemma  \ref{lemma:secondHomologyOfCltimesM}}]
Let $M$ be a $\ZZ[x]$-module such that $M^{\ZZ x}=0.$ Then there is a natural isomorphism 
\begin{equation}
H_2(\ZZ x\ltimes M)\cong (\Lambda^2 M)_{\ZZ x}.    
\end{equation}
\end{lemma}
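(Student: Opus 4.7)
The plan is to mirror the group-theoretic argument by first proving a Lie analog of Lemma \ref{lemma:homologyOfCltimesU} and then specializing to an abelian ideal. For any Lie algebra $\uu$ equipped with a $\ZZ x$-action and any $n \geq 1$, I would establish a natural short exact sequence
\begin{equation}
0 \longrightarrow H_n(\uu)_{\ZZ x} \longrightarrow H_n(\ZZ x \ltimes \uu) \longrightarrow H_{n-1}(\uu)^{\ZZ x} \longrightarrow 0.
\end{equation}
This comes from the Hochschild--Serre spectral sequence $E^2_{p,q} = H_p(\ZZ x, H_q(\uu)) \Rightarrow H_{p+q}(\ZZ x \ltimes \uu)$ associated with the extension $0 \to \uu \to \ZZ x \ltimes \uu \to \ZZ x \to 0$. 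The universal enveloping algebra of $\ZZ x$ is the polynomial ring $\ZZ[x]$, and the trivial module $\ZZ$ admits the length-one free resolution $0 \to \ZZ[x] \xrightarrow{\cdot x} \ZZ[x] \to \ZZ \to 0$. Hence $H_p(\ZZ x, -)$ vanishes for $p \geq 2$, while $H_0(\ZZ x, N) = N_{\ZZ x}$ and $H_1(\ZZ x, N) = N^{\ZZ x}$. The spectral sequence is therefore concentrated in two columns and collapses, yielding the displayed short exact sequence.

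Next, I would apply this with $\uu = M$ and $n = 2$, invoking the standard computation of the homology of an abelian Lie algebra: $H_1(M) = M$ always, and $H_2(M) \cong \Lambda^2 M$ (by \cite[Th.~8.4]{ivanov2020homology}, which identifies $H_2 = \mathrm{Tor}_2^{U\gg}(\ZZ,\ZZ)$ with the Chevalley--Eilenberg $H_2$, and the latter specializes to $\Lambda^2 M$ when the bracket on $M$ is zero). The short exact sequence becomes
\begin{equation}
0 \longrightarrow (\Lambda^2 M)_{\ZZ x} \longrightarrow H_2(\ZZ x \ltimes M) \longrightarrow M^{\ZZ x} \longrightarrow 0,
\end{equation}
and the hypothesis $M^{\ZZ x} = 0$ forces the right-hand term to vanish, producing the required natural isomorphism.

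The main obstacle, or rather the only point requiring care, is checking that the Hochschild--Serre spectral sequence behaves as expected for Lie algebras over $\ZZ$ (as opposed to over a field), together with the computation of $H_*(\ZZ x, -)$ from the length-one resolution. Once these are in hand, the rest is formal, and the naturality in $M$ follows from the naturality of the spectral sequence in the coefficient system $\uu$.
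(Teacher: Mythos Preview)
Your argument is correct but takes a genuinely different route from the paper. The paper does not invoke the Hochschild--Serre spectral sequence at all; instead it computes directly with the Chevalley--Eilenberg complex $\Lambda^3\gg \to \Lambda^2\gg \to \gg$ for $\gg=\ZZ x\ltimes M$, using the decomposition $\Lambda^2\gg \cong (\ZZ x\otimes M)\oplus\Lambda^2 M$ and $\Lambda^3\gg \cong (\ZZ x\otimes\Lambda^2 M)\oplus\Lambda^3 M$. One checks by hand that $\Ker(d_2)=\Lambda^2 M$ (this is exactly where $M^{\ZZ x}=0$ enters) and that ${\rm Im}(d_3)=x\cdot\Lambda^2 M$, giving $(\Lambda^2 M)_{\ZZ x}$ on the nose. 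This avoids any question about the spectral sequence over $\ZZ$ and relies only on the identification of $H_2$ with Chevalley--Eilenberg $H_2$ from \cite[Th.~8.4]{ivanov2020homology}.

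Your spectral-sequence approach is the more conceptual one and has the virtue of paralleling the group-theoretic Lemmas \ref{lemma:homologyOfCltimesU}--\ref{lemma:secondHomologyOfCltimesM} exactly. The concern you flag is legitimate: the Hochschild--Serre spectral sequence for ${\sf Tor}^{U\gg}$ requires $U\gg$ to be flat over $U\mathfrak{h}$, and for Lie algebras over $\ZZ$ that are not free as $\ZZ$-modules this is not automatic from PBW. In the case at hand it does hold, though, because $U(\ZZ x\ltimes M)$ is precisely the Ore extension $\Sym(M)[x;d]$ (with $d$ the derivation extending the $\ZZ[x]$-action on $M$), and an Ore extension is always a free left module over its base ring with basis $1,x,x^2,\dots$. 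Once you note this, your argument goes through cleanly. The paper's hands-on computation trades this verification for an explicit unpacking of $d_2$ and $d_3$; your route buys a statement valid for all $n$ and a direct analogy with the group case.
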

\begin{proof}
By \cite[Th.8.4]{ivanov2020homology} we obtain that the second homology of a Lie algebra $\gg$ can be computed using the Chevalley-Eilenberg coalgebra ${\sf CE}_\bullet(\gg).$ In other words, $H_2(\gg)$ is naturally isomorphic to the homology of the complex
\begin{equation}
    \Lambda^3 \gg \overset{d_3}\longrightarrow \Lambda^2 \gg \overset{d_2}\longrightarrow \gg,
\end{equation}
where $d_2(a \wedge b) = [a,b]$ and 
\[
d_3(a\wedge b \wedge c)=[a,b]\wedge c + [b,c]\wedge a + [c,a] \wedge b.
\] Then we just need to make a computation using this complex.

Since for any two abelian groups $A,B$ we have $\Lambda^*(A\oplus B)\cong \Lambda^*(A)\otimes \Lambda^*(B),$
in our case $\gg=\ZZ x \ltimes M $ we obtain $\Lambda^2 \gg \cong (\ZZ x\otimes  M) \oplus (\Lambda^2(M)) $ and $\Lambda^3 \gg = (\ZZ x \otimes  \Lambda^2 M) \oplus   (\Lambda^3 M).$ The map $\Lambda^2(M)\to \gg$ is trivial by the definition of $\ZZ x \ltimes M$ and the map $\ZZ x\otimes M \to \gg $ is defined by the formula $nx \otimes m\mapsto (0,nxm).$ Since $M^{\ZZ x}=0,$ we have 
\[{\sf Ker}(\Lambda^2 \gg \to \gg )=\Lambda^2 M.\]
The map $\Lambda^3 M \to \Lambda^2 \gg$ is trivial and the map $\ZZ x \otimes \Lambda^2 M \to \Lambda^2 M$ is given by $d_3(nx \wedge m_1 \wedge m_2 ) = nx m_1 \wedge m_2 - nx m_2 \wedge m_1 = nx(m_1\wedge m_2).$ Hence  
\[{\sf Im}( \Lambda^3 \gg \longrightarrow \Lambda^2 \gg ) = x(\Lambda^2 M).\]
Therefore $H_2(\gg)=(\Lambda^2 M)_{\ZZ x}.$
\end{proof}

We define the lamplighter Lie algebra as the semidirect product 
\begin{equation}
\ll=\ZZ x \ltimes \ZZ[x].
\end{equation}
Then it is easy to check that $\widehat{\ll} \cong  \ZZ x \ltimes \ZZ\llb x \rrb.$

\begin{theorem}[{cf. Th. \ref{th:double_lamplighter}}]
The cokernel of the map 
\begin{equation}
H_2(\ll) \longrightarrow H_2( \widehat{\ll} )
\end{equation}
is not cotorsion. 
\end{theorem}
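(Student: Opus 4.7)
The plan is to mirror the proof of Theorem \ref{th:double_lamplighter} verbatim, replacing the multiplicative cyclic group $C$ by the abelian Lie algebra $\ZZ x$. Since multiplication by $x$ is injective on both $\ZZ[x]$ and $\ZZ\llb x \rrb$, we have $\ZZ[x]^{\ZZ x} = 0 = \ZZ\llb x \rrb^{\ZZ x}$, so the preceding lemma identifies
\[
H_2(\ll) \cong (\Lambda^2 \ZZ[x])_{\ZZ x}, \qquad H_2(\widehat{\ll}) \cong (\Lambda^2 \ZZ\llb x \rrb)_{\ZZ x},
\]
with the map between them induced by the natural inclusion $\ZZ[x] \hookrightarrow \ZZ\llb x \rrb$. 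Let $A$ denote its cokernel; the goal is to show that $A$ is not cotorsion.

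The next step is to rerun the tensor-to-power-series construction from Theorem \ref{th:double_lamplighter}. The map $\theta : \ZZ\llb x \rrb \otimes \ZZ\llb x \rrb \to \ZZ\llb x, y \rrb$, $f \otimes g \mapsto f(x) g(y)$, has image $\ZZ\llb x \rrb \cdot \ZZ\llb y \rrb$ and sends $f \otimes f$ into $\Sym(\ZZ\llb x \rrb \cdot \ZZ\llb y \rrb)$, so it descends to an epimorphism out of $\Lambda^2 \ZZ\llb x \rrb$. The essential new computation is the translation of the $\ZZ x$-action: the diagonal bracket $[x, f \otimes g] = xf \otimes g + f \otimes xg$ corresponds under $\theta$ to multiplication by $x + y$ on $\ZZ\llb x, y \rrb$, which is the additive analogue of the multiplicative factor $(1+x)(1+y)$ appearing in the group case. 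Taking $\ZZ x$-coinvariants and using that $\Lambda^2 \ZZ[x]$ maps into $\ZZ[x] \cdot \ZZ[y] = \ZZ[x, y]$, we obtain an epimorphism
\[
A \twoheadrightarrow \frac{\ZZ\llb x \rrb \cdot \ZZ\llb y \rrb}{(x+y) \cdot \ZZ\llb x \rrb \cdot \ZZ\llb y \rrb + \ZZ[x, y] + \Sym(\ZZ\llb x \rrb \cdot \ZZ\llb y \rrb)}.
\]

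To finish, since $\ZZ[x, y] \subseteq \ZZ[x]^l \cdot \ZZ[y]^l$, this further surjects onto the quotient from Theorem \ref{th:power_series_cotorsion} taken with the polynomial $P(x, y) = x + y$, that is $U(x) = x$ and $V(x) = -1$. Here $P$ is symmetric and $U/V = -x \in \ZZ[x]^l \smallsetminus \ZZ$, so the hypotheses of the theorem are satisfied and the quotient is not cotorsion. Since a quotient of a cotorsion group is cotorsion, this forces $A$ to be non-cotorsion as well. No genuinely new difficulty appears beyond this arithmetical dictionary between the multiplicative and additive conventions; the point requiring attention is exactly the computation of the $\ZZ x$-action on $\ZZ\llb x, y \rrb$, which turns out to be multiplication by $x + y$ rather than by $(1+x)(1+y) - 1 = x + y + xy$ as in the group case.
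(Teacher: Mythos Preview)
Your argument is correct and follows exactly the approach the paper indicates: rerun the proof of Theorem~\ref{th:double_lamplighter} with the $\ZZ x$-action in place of the $C$-action, so that the relevant polynomial becomes $P(x,y)=x+y$ rather than $x+y+xy$. The identification of the diagonal bracket with multiplication by $x+y$, the passage from $\ZZ[x,y]$ to $\ZZ[x]^l\cdot\ZZ[y]^l$, and the verification of the hypotheses of Theorem~\ref{th:power_series_cotorsion} with $U(x)=x$, $V(x)=-1$ are all handled correctly.
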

\begin{proof}
The proof is very similar to the proof of Theorem \ref{th:double_lamplighter}. In this proof we have to use Theorem \ref{th:power_series_cotorsion} with $P(x,y)=x+y$ (while in the case of groups we used $P(x,y)=x+y+xy.$) 
\end{proof}

\vspace{0.3cm}\noindent{\bf Theorem A for Lie algebras.} {\it Let $\ff$ be a free Lie algebra (over $\ZZ$) with at least two generators. Then $H_2(\widehat{\ff})$ is not cotorsion.} 
\vspace{0.3cm}
\begin{proof}
The proof repeats literally the proof of Theorem A for groups (see Section~\ref{sec:proof_of_TheoremA}).
\end{proof}

 \section{Proofs of Remarks}
\noindent {\it Proof of Remark \ref{remark1}}
First pick a free group $F$ and a homomorphism $F\to G$ which induces an isomorphism $H_1(F)\to H_1(G)$. The statement then follows by induction on $n$. Suppose that the above homomorphism induces an isomorphism $F/\gamma_n(F)\to G/\gamma_n(G)$. There is a natural exact sequence
$$
H_2(G)\to H_2(G/\gamma_n(G))\to \gamma_n(G)/\gamma_{n+1}(G)\to 1.
$$
The left hand arrow is zero, since $H_2(G/\gamma_n(G))=H_2(F/\gamma_n(F))=\gamma_n(F)/\gamma_{n+1}(F)$ is free abelian and ${\sf Hom}(H_2(G),\mathbb Z)=0$. Hence the map $F\to G$ induces an isomorphism $\gamma_n(F)/\gamma_{n+1}(F)\to \gamma_n(G)/\gamma_{n+1}(G)$ and we can make the needed inductive step.

\vspace{.5cm}\noindent{\it Proof of Remark \ref{remark2}}
For a connected space $X$, $H_1(X)=H_1(\pi_1(X))$. Recall that (see \cite{ivanov2021right}), there is a natural short exact sequence 
$$
1\to {\varprojlim}^1 M_n(\pi_1(X))\to \pi_1({\mathbb Z}_\infty X)\to \widehat{\pi_1(X)}\to 1,
$$
where $M_n$ are so-called Baer invariants. Here we don't need a definition of the Baer invariants, we will use only the fact that $M_n$-s are certain abelian groups, for details see \cite{ivanov2021right}. Denote $H:={\sf coker}\{H_1(X)\to H_1({\mathbb Z}_\infty X)\}.$ The above description of the group $\pi_1({\mathbb Z}_\infty X)$ imply that there is a natural short exact sequence of abelian groups
\begin{equation}\label{lim1lim}
0\to \overline{{\varprojlim}^1 M_n(\pi_1(X))}\to H\to coker\{H_1(\pi_1(X))\to H_1(\widehat{\pi_1(X)})\}\to 0. 
\end{equation}
Here $\overline{{\varprojlim}^1 M_n(\pi_1(X))}$ is a certain quotient of ${\varprojlim}^1 M_n(\pi_1(X))$. Any quotient of a ${\varprojlim}^1$ of abelian groups is a cotorsion group (see Section~\ref{subsec:cotorsion_general} for the properties of cotorsion groups). That is, the group $\overline{{\varprojlim}^1 M_n(\pi_1(X))}$ is cotorsion. The right hand side of (\ref{lim1lim}) also is cotorsion by the result of Barnea and Shelah \cite{barnea2018abelianization}. The class of cotorsion groups is closed under extensions and the needed statement follows.

\vspace{.5cm}\noindent{\it Proof of Remark \ref{remark3}.} Let $\ff$ denote the Lie algebra freely generated by $a_1,\dots,a_n.$ Then $H_2(\widehat{\ff})$ is the homology of the complex 
\begin{equation}
\Lambda^3 (\widehat{\ff}) \overset{d_3}\longrightarrow \Lambda^2 (\widehat{\ff}) \overset{d_2}\longrightarrow \widehat{\ff},
\end{equation}
where $d_2(a\wedge b)=[a,b]$
and
\begin{equation}
d_3(a\wedge b \wedge c ) = [a,b] \wedge c + [b,c]\wedge a + [c,a] \wedge b   
\end{equation}
(see \cite[Th.8.4]{ivanov2020homology}). We need to prove that the subgroup of $H_2( \widehat{\ff} )$ consisting of elements that can be presented as $ \alpha_1 \wedge a_1 + \dots + \alpha_n \wedge a_n$ for some $\alpha_1,\dots,\alpha_n\in \widehat{\ff}$ is cotorsion. 

For any Lie algebra $\gg$ with some fixed elements $a_1,\dots,a_n\in \gg$ we consider the maps 
\begin{equation}
\theta_{\gg} : \gg^n \longrightarrow \Lambda^2 \gg, \hspace{1cm} \theta_{\gg}(\alpha_1,\dots,\alpha_n)=\alpha_1 \wedge a_1 +\dots + \alpha_n \wedge a_n,    
\end{equation}
\begin{equation}
\tau_\gg : \gg^n \longrightarrow \gg, \hspace{1cm} \tau_\gg(\alpha_1,\dots,\alpha_n)=[\alpha_1,a_1] + \dots + [\alpha_n,a_n].    
\end{equation}
Note that $\tau_\gg=d_2 \theta_\gg.$
We also set $K(\gg) := {\sf Ker}(\tau_\gg).$ Then we obtain a map 
\begin{equation}
\theta'_\gg: K(\gg) \longrightarrow H_2(\gg)
\end{equation}
induced by $\theta_\gg.$

In this terms we need to prove that the image of $\theta'_{\widehat\ff} : K(\widehat{\ff})
\longrightarrow
H_2(\widehat \ff)$ is cotorsion. 
Since we have a commutative diagram 
\begin{equation}
\begin{tikzcd}
K(\ff)\arrow[r,"\theta'_{\ff}"] \arrow[d] & H_2( \ff ) \arrow[d] \\  K(\widehat{\ff})\arrow[r,"\theta'_{\widehat{\ff}}"] & H_2(\widehat{\ff}),  
\end{tikzcd}
\end{equation}
and $H_2(\ff)=0,$ we obtain $\theta'_{\widehat{\ff}}( K(\ff) )=0$ (here we identify $\ff$ with its image in $\widehat{\ff}$ and assume $K(\ff)\subseteq K(\widehat{\ff})$).   Therefore it is enough to prove that $K(\widehat{\ff})/K(\ff)$ is cotorsion because  the image of $\theta'_{\widehat{\ff}}:K(\widehat{\ff})\to H_2(\widehat{\ff})$ is its quotient. 

Let's prove it. The algebra $\ff$ has a natural grading $\ff=\bigoplus_{i\geq 1} \ff_i$ such that $\gamma_k(\ff)=\bigoplus_{i\geq k} \ff_i.$ It follows that $\widehat{\ff}=\prod_{i\geq 1} \ff_i.$  
Therefore, if we set $K_i:={\rm Ker}( \theta_i : \ff_i^n \to \ff_{i+1}),$ where $\theta_i$ is the restriction of $\theta_\ff,$ then $K(\ff)=\bigoplus_{i\geq 1} K_i$ 
and $K( \widehat{\ff})=\prod_{i\geq 1} K_i.$ It follows that, $K(\widehat{\ff})/K(\ff)=(\prod_{i\geq 1} K_i) / (\bigoplus K_i)$ is cotorsion by the theorem of Hulanicki.

\printbibliography

\end{document}